\numberwithin{equation}{section}
\newcommand{\Aut}{{\rm Aut}}
\newcommand{\Homeo}{{\rm Homeo}}
\newcommand{\Sub}{{\rm Sub}}
\newcommand{\mbb}{\mathbb}
\newcommand{\N}{{\mathbb N}}
\newcommand{\Z}{{\mathbb Z}}
\newcommand{\R}{{\mathbb R}}
\newcommand{\C}{{\mathbb C}}
\newcommand{\Sc}{{\mathbb S}}
\newcommand{\gf}{{\mathfrak g}}
\newcommand{\lf}{{\mathfrak l}}
\newcommand{\Hc}{{\mathcal H}}
\newcommand{\K}{{\mathcal K}}
\newcommand{\D}{{\mathcal D}}
\newcommand{\U}{{\mathcal U}}
\newcommand{\cS}{{\mathcal S}}
\newcommand{\Bc}{{\mathcal B}}
\newcommand{\RP}{{\mathbb{RP}}}
\newcommand{\spg}{{\rm Sub}^p_G}
\newcommand{\spp}{{\rm Sub}^{p}}
\newcommand{\du}{{\rm d\,}}
\newcommand{\Id}{{\rm Id}}
\newcommand{\Inn}{{\rm Inn}}
\newcommand{\inn}{{\rm inn}}
\newcommand{\GL}{{\rm GL}}
\newcommand{\SL}{{\rm SL}}
\newcommand{\T}{{\overline{T}}}
\let\ol=\overline
\let\ap=\alpha
\let\mi=\setminus
\newtheorem{thm}{Theorem}[section]
\newtheorem{cor}[thm]{Corollary}
\newtheorem{lem}[thm]{Lemma}
\newtheorem{prop}[thm]{Proposition}
\newtheorem{defn}[thm]{Definition}
\newtheorem{rem}[thm]{Remark}
\newtheorem{example}[thm]{Example}
\theoremstyle{remark}
\newcommand{\cp}{{\mathcal{C}'}}
\newcommand{\cP}{{\mathcal{P}}}
\begin{document}

\title[Distal actions on space of one-parameter subgroups]{Characterisation of distal 
actions of automorphisms on the space of one-parameter subgroups of Lie groups}

\author{Debamita Chatterjee}
\address{Debamita Chatterjee\\
School of Physical Sciences\\
Jawaharlal Nehru University\\
New Delhi 110 067}

\email{debamita.math@gmail.com}

\author{Riddhi Shah}
\address{Riddhi Shah\\
School of Physical Sciences\\
Jawaharlal Nehru University\\
New Delhi 110 067}

\email{rshah@jnu.ac.in, riddhi.kausti@gmail.com}
\date{February 15, 2025}

\begin{abstract}
For a connected Lie group $G$ and an automorphism $T$ of $G$, we consider the action of $T$ on Sub$_G$, the  
compact space of closed subgroups of $G$ endowed with the Chabauty topology. We study the 
action of $T$ on Sub$^p_G$, the  closure in Sub$_G$ of the set of closed one-parameter subgroups of $G$. 
We relate the distality of the $T$-action on Sub$^p_G$ with that of the $T$-action on $G$ and characterise 
the same in terms of compactness of the closed subgroup generated by $T$ in Aut$(G)$ when $T$ acts distally 
on the maximal central torus and $G$ is not a vector group. We extend these results to the action of a subgroup
of Aut$(G)$, and equate the distal action of any closed subgroup ${\mathcal H}$ on Sub$^p_G$ 
with that of every element in ${\mathcal H}$. Moreover, we show that a connected Lie group $G$ acts distally 
on Sub$^p_G$ by conjugation if and only if $G$ is either compact or it is isomorphic to a direct product 
of a compact group and a vector group. Some of our results extend those of Shah and Yadav. 
\end{abstract}

\maketitle

{\small 
\noindent{\em 2020 Mathematics Subject Classification}. Primary 37B05; Secondary 22E15, 22D45 

\noindent{\em Keywords}. Distal actions of automorphisms, Lie groups, one-parameter subgroups, Chabauty topology.
}

\tableofcontents

\section{Introduction}

For a Hausdorff topological space $X$, a homeomorphism $T$ of $X$  
is said to be {\it distal} (equivalently, $T$ acts {\it distally} on $X$) if for any pair of distinct 
elements $x,y \in X$, the closure of the double orbit 
$\{(T^n(x), T^n(y)) \mid n\in \Z\}$ in $X \times X$ does not intersect 
the diagonal, i.e.\ for $x,y \in X$ with $x \neq y $, 
$\ol{\{(T^n(x), T^n(y))\mid n \in \Z\}} \cap \{(a,a)\mid a \in X\}=\emptyset$. 
David Hilbert introduced the notion of distality to study non-ergodic actions on compact spaces 
(see Ellis \cite{E} and Moore \cite{M9}). Distal actions on compact spaces, Lie groups as well as 
on locally compact groups have been studied by many mathematicians in different contexts 
(see Abels \cite{Ab1, Ab2}, Choudhuri et al \cite{CFY}, Ellis \cite{E}, 
Furstenberg \cite{Fu}, Palit-Shah \cite{PaS}, Palit-Prajapati-Shah \cite{PaPrS}, Raja-Shah 
\cite{RS1, RS2}, Shah \cite{Sh1}, Shah-Yadav \cite{SY1, SY2, SY3}, and references cited therein).

For a locally compact (Hausdorff) group  $G$ with the identity $e$, let $\Sub_G$ 
denote the set of all closed subgroups of $G$  endowed with the Chabauty topology. 
Recall that $\Sub_G$ is compact and Hausdorff, and it is metrisable if $G$ is 
second countable \cite{BP}. Let $\Aut(G)$ denote the group of all automorphisms of $G$; 
an automorphism is both a homomorphism and a homeomorphism. 
There is a canonical group action of $\Aut(G)$ on $\Sub_G$; namely, $(T,H)\mapsto T(H)$, $T \in \Aut(G)$, 
$H\in \Sub_G$; this action gives a homomorphism from $\Aut(G)$ to $\Homeo(\Sub_G)$. As the image
of $\Aut(G)$ under this action is a large subclass of homeomorphisms of $\Sub_G$, it is  
important to study the dynamics of the action of this special subclass in terms of distality. 

In \cite{SY3}, Shah and Yadav have studied the action of automorphisms of a connected Lie group $G$ 
on $\Sub_G$ extensively and have shown in particular that for a large class of $G$, which does not have 
any compact central subgroup of positive dimension, an automorphism $T$ of $G$ acts distally on 
$\Sub^a_G$, the space of closed abelian subgroups of $G$, if and only if $T$ generates 
a relatively compact subgroup in $\Aut(G)$. A similar characterisation also holds for $T$ 
belonging to the class (NC), i.e.\ those 
$T$ for which the closure of the $T$-orbit of any discrete cyclic group does not contain 
the trivial group $\{e\}$. Note that $\Sub^a_G$ is very large for many groups $G$; e.g.\ it is the 
same as $\Sub_G$ if $G$ is abelian. It also contains all discrete cyclic subgroups. 
Shah, with Palit in \cite{PaS}, and with Palit and Prajapati in \cite{PaPrS}, has 
studied distal actions of automorphisms of a discrete group $G$ on $\Sub^a_G$, where $G$ is  
either polycyclic or a lattice in a connected Lie group. 

The question arises whether one can characterise distal actions of automorphisms on other smaller invariant 
subspaces of $\Sub^a_G$ for a connected Lie group $G$. Most of the techniques in earlier result mentioned 
above involve study of the actions 
on a class of discrete abelian subgroups. Here, we consider the class of smallest closed connected 
abelian subgroups; namely, closed one-parameter subgroups of $G$ and study the dynamics of distal actions of 
automorphisms on it. Let $\spg$ denote the smallest closed subset of $\Sub_G$ containing all closed 
one-parameter subgroups of $G$. Note that $\spg$ is compact and it is invariant under the action of $\Aut(G)$. 
We get a characterisation for a large class of $G$ as follows. 

 \begin{thm}\label{main}
 Let $G$ be a connected Lie group without any non-trivial compact connected central subgroup and 
 let $T$ be an automorphism of $G$. Then the following hold:
 \begin{enumerate} 
 \item[{(1)}] If $G$ is abelian, then $G$ is isomorphic to $\R^d$ for some $d\in\N$, and  
 $T$ acts distally on $\spg$ if and only if $T \in \K\D$, where $\K$ is a 
 compact subgroup of $\GL(d,\R)$  and $\D$ is the center of $\GL(d,\R)$.
 \item[{(2)}] If $G$ is not abelian, then $T$ acts distally on $\spg$ if and only if $T \in \K$, a compact 
 subgroup of $\Aut(G)$.
 \end{enumerate}
 \end{thm}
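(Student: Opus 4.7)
The plan is to treat the two parts separately, with the bulk of the work in part~(2). In both parts the reverse direction is easy. For part~(2), if $T$ lies in a compact subgroup $\K$ of $\Aut(G)$, then $\overline{\langle T\rangle}\subseteq\K$ is compact and acts equicontinuously, hence distally, on the compact Hausdorff space $\spg$. For part~(1), if $T=KD$ with $K$ in a compact subgroup $\K$ of $\GL(d,\R)$ and $D\in\D$, then every $\lambda\Id\in\D$ fixes each line through the origin of $\R^d$ and fixes $\{0\}$, so acts as the identity on $\spg$; therefore the $T$-action on $\spg$ coincides with the $K$-action, which is distal as above.

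For part~(1), the hypothesis forces the connected abelian Lie group $G$ to be $\R^d$. The closed one-parameter subgroups of $\R^d$ are exactly the lines through the origin and the trivial subgroup, and a direct Chabauty computation identifies $\spg$ with $\RP^{d-1}\sqcup\{\{0\}\}$, with $\{0\}$ isolated. The $T$-action is the projectivisation on $\RP^{d-1}$ and fixes $\{0\}$, so distality on $\spg$ reduces to distality of the projective $T$-action on $\RP^{d-1}$. A standard Jordan-form argument---any non-trivial Jordan block or any pair of eigenvalues of distinct moduli produces two distinct projective points whose forward $T$-orbits share a common limit---shows this happens iff $T$ is semisimple with all complex eigenvalues of a common modulus $r>0$. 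Writing $T=(rI)(r^{-1}T)$ with $rI\in\D$ and $r^{-1}T$ semisimple with modulus-$1$ eigenvalues (hence conjugate into $O(d)$, and so lying in a compact subgroup $\K$ of $\GL(d,\R)$) yields the factorisation.

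For part~(2), the plan is to show directly that distality of $T$ on $\spg$ implies $\overline{\langle T\rangle}$ is compact in $\Aut(G)$; the converse is immediate. I work via the derivative $dT$ on the Lie algebra $\gf$, exploiting the equivariance $T(\overline{\exp(\R X)})=\overline{\exp(\R\,dT(X))}$. Arguing by contrapositive: if $dT$ either admits a complex eigenvalue of modulus $\neq 1$, or has all modulus-$1$ eigenvalues but a non-trivial Jordan block, then I select $X,Y\in\gf$ from suitable generalised eigenvectors of $dT$ so that the directions of $dT^n(X)$ and $dT^n(Y)$ collapse asymptotically, yielding distinct subgroups $\overline{\exp(\R X)}\neq\overline{\exp(\R Y)}$ in $\spg$ whose $T$-iterates accumulate on a common element of $\spg$, contradicting distality. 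Hence $\overline{\langle dT\rangle}$ is semisimple with modulus-$1$ eigenvalues and so relatively compact in $\GL(\gf)$; this transfers, via the closed embedding $\Aut(G)\hookrightarrow\GL(\gf)$ valid for connected $G$, to compactness of $\overline{\langle T\rangle}$ in $\Aut(G)$. The conclusion dovetails with the Shah--Yadav characterisation \cite{SY3} of distal actions on $\Sub^a_G\supseteq\spg$.

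The main obstacle in part~(2) is that the assignment $X\mapsto\overline{\exp(\R X)}$ identifies different lines in $\gf$ whenever $\exp(\R X)$ fails to be closed: its closure is then a positive-dimensional torus that many distinct lines in $\gf$ can realise. Consequently the generalised-eigenvector constructions sketched above may inadvertently yield equal elements of $\spg$, trivialising the intended contradiction. The hypothesis that $G$ has no non-trivial compact connected central subgroup is decisive here: it eliminates the central-torus collapses that would otherwise obstruct the construction, and via the structure of the maximal compact subgroup of $G$ it lets one choose $(X,Y)$ inside a subspace of $\gf$ transverse to the offending tori, on which the assignment genuinely separates the relevant $dT$-orbits.
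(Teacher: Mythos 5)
Your part (1) and both easy converse directions are sound and essentially coincide with the paper's route: the paper also identifies $\spg$ with $\RP^{d-1}\sqcup\{\{0\}\}$ and quotes Theorem 1 of \cite{SY1} for the criterion on the projectivised action that you re-derive by Jordan forms. The forward direction of part (2), however, has two genuine gaps. First, your contrapositive premise is wrong as stated: an eigenvalue of $\du T$ of modulus $\ne 1$ does \emph{not} by itself yield a pair of lines whose directions collapse under iteration. If $\du T=\psi\circ r\,\Id$ with $\psi$ in a compact group and $r\ne\pm1$, then $\du T$ acts distally on the projective space of $\gf$ (this is exactly the extra freedom recorded in part (1)), so no generalised-eigenvector construction can produce a proximal pair there. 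Excluding this case is precisely where non-abelianness must enter, and it enters through the bracket rather than the spectrum: the paper's argument (in the simply connected nilpotent case) is that $r^n\Id$ cannot be an automorphism of a non-abelian Lie algebra because $r^n[X,Y]=[r^n X,r^n Y]=r^{2n}[X,Y]$. Your sketch never invokes the non-abelian hypothesis inside the eigenvector construction, so it cannot close this case.

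Second, even when a collapsing pair of lines in $\gf$ does exist, convergence of lines $\R X_n\to\R X$ does not imply convergence of $\ol{\exp(\R X_n)}$ to $\ol{\exp(\R X)}$ in the Chabauty topology: in a $2$-torus, the circles of slopes $p_n/q_n\to p/q$ (with $p_n/q_n\ne p/q$) converge to the whole torus, not to the circle of slope $p/q$. The paper establishes that $X\mapsto\exp(\R X)$ induces a homeomorphism onto $\cP_1(G)$ only for simply connected nilpotent $G$ (Lemma \ref{scn}); your proposed fix of choosing $(X,Y)$ ``transverse to the offending tori'' is not substantiated, and the hypothesis that $G$ has no non-trivial compact connected \emph{central} subgroup does not remove non-central tori (e.g.\ maximal tori of a semisimple factor, or the compact group $K$ in the decomposition $H=K\ltimes C(T)$ that the paper must analyse). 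This is exactly why the paper does not argue on the Lie algebra in the non-nilpotent case: it first proves Theorem \ref{distal-g} (distality on $\spg$ forces distality on $G$, via a six-step study of the contraction groups $C(T)$, $C(T^{-1})$ and explicit Chabauty limits $T^{n_k}(\{x_t\})\to B$ computed with $(I-T_1)^{-1}$), then Theorem \ref{unip} (no non-trivial unipotent automorphism acts distally on $\spg$), and only then extracts compactness of $\ol{\langle T\rangle}$ from the Jordan decomposition $T^n=S\circ\psi$ inside a connected almost algebraic subgroup $\K\times\,\U$ of $\Aut(G)$. None of that machinery is replaced by your sketch, so the proof of (2) beyond the simply connected nilpotent case is missing.
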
 

Note that the condition on the center in Theorem \ref{main} holds if $G$ is simply connected, 
or more generally its nilradical is simply connected. 
It also holds for $G/M$ for any connected Lie group $G$, where $M$ is the maximal central torus in $G$ 
(the maximal compact connected central subgroup in $G$). The result in the case of non-abelian 
groups $G$ in (2) as above gives a similar characterisation as in Theorem 4.1 of \cite{SY3}, even though 
the space $\spg$ is much smaller than $\Sub^a_G$. In case $G$ as above is abelian, then $G=\R^d$, 
a vector group, and $\spg$ is homeomorphic to $\RP^{d-1}\sqcup\{\{0\}\}$, and the 
Theorem \ref{main}\,(1) shows that a larger class of $T$ acts distally on $\spg$.

In \cite{SY3}, it is shown for a connected Lie group $G$ without any compact central subgroup 
of positive dimension that if $T$ acts distally on $\Sub^a_G$, then 
it acts distally on $G$ (more generally, see Theorem 3.6 and Corollary 3.7 of \cite{SY3}). We generalise 
this for Lie groups which are not vector groups; as in the later case, Theorem \ref{main}\,(1) shows that it is 
possible to have automorphisms which act distally on $\spg$ but not on $G$.

\begin{thm} \label{distal-g}
Let $G$ be a connected Lie group which is not a vector group and let $T\in\Aut(G)$ 
be such that it acts distally on the largest compact connected central subgroup of $G$. 
If $T$ acts distally on $\spg$, then $T$ acts distally on $G$. 
\end{thm}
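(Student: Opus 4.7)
\emph{Proof plan.} I argue by contrapositive: assume $T$ does not act distally on $G$, and construct a pair of distinct elements of $\spg$ brought arbitrarily close by the $T$-orbit, contradicting the distality of $T$ on $\spg$. By the standard Lie-algebraic characterisation of distality for connected Lie groups (cf.\ Abels, and results used in \cite{SY3}), non-distality on $G$ yields a complex eigenvalue $\lambda$ of $dT:\gf\to\gf$ with $|\lambda|\neq 1$; after replacing $T$ by $T^{-1}$ if necessary, take $|\lambda|>1$.

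The first step is to force $dT$ to have two eigenvalues of distinct absolute values. Write $\mathfrak m$ for the Lie algebra of $M$. If $\mathfrak m\neq 0$, distality of $T$ on $M$ forces every eigenvalue of $dT|_{\mathfrak m}$ onto the unit circle, giving an eigenvalue of absolute value $1\neq|\lambda|$. If $\mathfrak m=0$, then $G$ not a vector group forces $\gf$ to be non-abelian, since a connected abelian Lie group with no non-trivial compact connected central subgroup is isomorphic to $\R^d$. On the generalised eigenspace decomposition $\gf_\C=\bigoplus\gf_\alpha$ for $dT$, the semisimple part $S$ of $dT$ is also a Lie algebra automorphism (Jordan--Chevalley in the algebraic group $\Aut(\gf)$), so $[\gf_\alpha,\gf_\beta]\subseteq\gf_{\alpha\beta}$; non-abelianness of $\gf$ then yields eigenvalues $\alpha,\beta$ with $\alpha\beta$ also an eigenvalue. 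If all eigenvalues shared a common absolute value $r$, we would have $r^2=r$, hence $r=1$, contradicting $|\lambda|>1$. In every case we obtain eigenvalues $\lambda$ and $\mu$ of $dT$ with $|\lambda|>|\mu|$.

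For the second step, choose real eigenvectors $X,Y\in\gf$ for $\lambda$ and $\mu$ (working in a $2$-dimensional real invariant subspace and passing to a subsequence when the eigenvalues are genuinely complex), with $X$ picked generically enough that $\exp(\R X)$ is closed. Set $H_1:=\ol{\exp(\R X)}$ and $H_2:=\ol{\exp(\R(X+Y))}$; linear independence of $X,Y$ makes these distinct elements of $\spg$ (were $H_1=H_2$, then $X+Y$ would lie in $\R X$, a contradiction). Reparametrising the tangent direction (which only rescales the one-parameter subgroup as a set), $T^n(H_2)=\ol{\exp(\R Z_n)}$ where $Z_n:=dT^n(X+Y)/\lambda^n=X+(\mu/\lambda)^n Y\to X$, while $T^n(H_1)=H_1$ (and in the complex case, $T^n(H_1)$ remains close to $T^n(H_2)$ after the same rotation). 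Continuity of $\exp$ and the definition of Chabauty convergence then give $T^n(H_2)\to H_1$ along the chosen (sub)sequence, so the pair $(H_1,H_2)$ is asymptotic under the $T$-action, contradicting distality on $\spg$.

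The main obstacle is executing the Chabauty convergence rigorously: ruling out spurious limit points of $T^n(H_2)$ outside $H_1$ when $\ol{\exp(\R X)}$ contains compact (toral) factors, and handling Jordan blocks or genuinely complex eigenvalues, which introduce polynomial and rotational factors in $dT^n$. These are controlled by analysing sequences $\exp(t_n Z_n)$ with $Z_n\to X$ and exploiting the dominant $|\lambda|^n$ expansion in the $X$-direction to force the parameters $t_n$ into compact sets, together with a density argument selecting $X$ in a direction whose one-parameter subgroup is closed.
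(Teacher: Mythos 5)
Your overall strategy---produce two distinct elements of $\spg$ whose $T$-orbits become asymptotic from a pair of eigenvalues of $\du T$ with distinct moduli---is reasonable in spirit, and your first step (the eigenvalue bookkeeping via $[\gf_\alpha,\gf_\beta]\subseteq\gf_{\alpha\beta}$ and the action on $\mathfrak m$) is fine. But the second step contains a genuine gap: the claimed convergence $T^n(H_2)\to H_1$ is simply false in general, and it fails already in $G=\SL(2,\R)$ (which satisfies all hypotheses, since its centre is finite). Take $T=\inn(\mathrm{diag}(e,e^{-1}))$, $X=\bigl(\begin{smallmatrix}0&1\\0&0\end{smallmatrix}\bigr)$ with $\lambda=e^{2}$ and $Y=\bigl(\begin{smallmatrix}0&0\\-1&0\end{smallmatrix}\bigr)$ with $\mu=e^{-2}$. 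Then $H_2=\ol{\exp(\R(X+Y))}=SO(2)$, and $T^n(H_2)=\exp\bigl(\R\bigl(\begin{smallmatrix}0&1\\-e^{-4n}&0\end{smallmatrix}\bigr)\bigr)$ converges in the Chabauty topology not to $H_1=\exp(\R X)$ but to the strictly larger closed subgroup $\{\pm I\}\exp(\R X)$. So the pair $(H_1,H_2)$ is \emph{not} proximal, and your "dominant expansion forces the parameters into compact sets" heuristic does not rule out such limit points: the spurious elements come from unbounded parameters on subgroups whose closures jump in dimension or pick up torsion. Relatedly, your criterion for $H_1\neq H_2$ ("$X+Y$ would lie in $\R X$") presupposes that $\ol{\exp(\R X)}$ is one-dimensional, which is exactly what needs to be proved; a "density argument" inside an eigenspace will not give closedness of $\exp(\R X)$. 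The fact that actually rescues both points is that the hypothesis on $M$ forces $C(T^{-1})$ to be closed, simply connected and nilpotent (Lemma \ref{ct-closed}), so that $\exp(\R X)\cong\R$ is closed and properly embedded---but your proposal never invokes this, and it is the only place the hypothesis on the central torus genuinely enters beyond your case $\mathfrak m\neq 0$.

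The example above shows the argument can sometimes be repaired by replacing $H_1$ with the actual limit $L$ of $T^{n_k}(H_2)$ and using the pair $(L,H_2)$, but that requires proving $T(L)=L$ (not automatic for a subsequential limit) and $L\neq H_2$, neither of which your sketch addresses; and when $\mathfrak m=0$ with $[X,Y]\neq 0$ there is no closed-form control of $\exp(t(X+\epsilon Y))$ at all. This is precisely where the paper takes a different route: it reduces to showing $C(T)$ and $C(T^{-1})$ are trivial (via Theorem 4.1 of \cite{RS2} and Lemma \ref{ct-closed}), disposes of non-abelian $C(T)$ using the nilpotent case of Theorem \ref{main}\,(2), and then works inside the solvable group $\ol{A\ltimes C(T)}$ where $A$ is pointwise fixed (obtained from a root-of-unity eigenvalue, Lemma \ref{eigen}) and $C(T)$ is a vector group. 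There one can write $T(x_t)=x_ty_t$ with $y_t\in C(T)$, compute any subsequential limit $B$ of $T^{n_k}(\{x_t\})$ explicitly via the geometric series $(I-T_1)^{-1}$, verify $T(B)=B$ by hand, and then apply distality to the pair $(\{x_t\},T(\{x_t\}))$---both of which converge to $B$---to force $T(\{x_t\})=\{x_t\}$ and derive a contradiction. That pivot (proving the limit is $T$-fixed and testing distality against $T(\{x_t\})$ rather than against an explicitly constructed asymptotic partner) is the missing idea in your proposal.
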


An automorphism $T$ of a connected Lie group $G$ is said to be unipotent if 
all the eigenvalues of $\du T$ are equal to 1, where $\du T$ is the corresponding 
Lie algebra automorphism of the Lie algebra of $G$. 
It is known that $T$ acts distally on $G$ if and only if all the eigenvalues of $\du T$ have absolute value 1 
 (cf.\ \cite{Ab1}). Any unipotent automorphism acts distally on $G$ and if it belongs to a compact subgroup 
 of $\Aut(G)$, then it is trivial. Theorem 4.3 of \cite{SY3} shows that no non-trivial unipotent automorphism 
 acts distally on $\Sub^a_G$; generalising this statement we prove the following.
 
\begin{thm} \label{unip} Let $G$ be a connected Lie group and let $T\in\Aut(G)$ be unipotent. 
Then $T$ acts distally on $\spg$ if and only if $T=\Id$, the identity map. 
\end{thm}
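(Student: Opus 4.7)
Since $T = \Id$ trivially acts distally, the plan is to establish the contrapositive: if $T$ is unipotent with $T \ne \Id$, then $T$ fails to act distally on $\spg$. The key device is a Jordan pair for the nilpotent operator $N := dT - \Id$ on $\gf$. Since $N \ne 0$, I choose $Z \in \gf$ so that $N^{k-1}Z \ne 0$ while $N^k Z = 0$ for the smallest such $k \ge 2$, and set $X := N^{k-2}Z$ and $Y := N^{k-1}Z$. Then $X, Y \in \gf$ are linearly independent, $Y \ne 0$, $dT(Y) = Y$, and $dT(X) = X + Y$, so with $P_Z := \overline{\exp(\R Z)}$ I have $T^n(P_X) = P_{X + nY}$ and $T^n(P_Y) = P_Y$ for every $n \in \Z$.

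The analytic heart of the proof is the observation that for each fixed $t \in \R$,
\[
 \exp\!\Bigl(\tfrac{t}{n}(X + nY)\Bigr) = \exp\!\Bigl(tY + \tfrac{t}{n}X\Bigr) \longrightarrow \exp(tY) \quad \text{as } n \to \infty,
\]
while the left-hand side lies in $T^n(P_X)$. Hence $\exp(\R Y) \subseteq \liminf_n T^n(P_X)$ in the Chabauty topology, so every subsequential Chabauty limit of $\{T^n(P_X)\}$ in the compact space $\spg$ contains $P_Y$. Now assume for contradiction that $T$ acts distally on $\spg$. By a classical theorem of Ellis, every orbit closure of a distal flow on a compact space is minimal, so in particular every point is recurrent; applied to $P_X$, some subsequence $T^{n_k}(P_X) \to P_X$. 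Combined with the previous observation, this forces $P_Y \subseteq P_X$, equivalently $Y \in \mathfrak{p}_X$, where $\mathfrak{p}_X$ denotes the Lie algebra of the closed connected abelian subgroup $P_X$.

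The contradiction is then produced by arranging $Y \notin \mathfrak{p}_X$. Replacing $X$ by $X + W$ with any $W \in \ker N$ preserves the Jordan pair property, since $N(X + W) = Y$; this gives substantial freedom. Whenever one can pick $W \in \ker N$ so that $\exp(\R(X + W))$ is closed and one-dimensional---equivalently $\mathfrak{p}_{X+W} = \R(X + W)$---linear independence of $X, Y$ forces $Y \notin \mathfrak{p}_{X+W}$ and closes the argument. This is automatic when $G$ is simply connected and, more generally, follows from the density of rational directions in the relevant compact factors. The residual case is that every such $X + W$ generates a higher-dimensional closure; here $Y \in \mathfrak{p}_X$, the subgroup $P_X$ is $T$-invariant (since $T(P_X) = P_{X + Y} \subseteq P_X$), and $T$ restricts to a nontrivial unipotent automorphism of $P_X$. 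Because $\spp_{P_X}$ embeds $T$-equivariantly in $\spg$, an induction on $\dim G$ then finishes, with the abelian base $G \cong \R^a \times \mbb{T}^b$ handled by combining Theorem~\ref{main}\,(1) for the vector factor with a direct analysis of the torus factor. The main obstacle is this final reduction---in particular, the choice of a suitable Jordan representative and the base-case analysis for mixed vector/torus groups; the earlier ``distality implies recurrence implies $P_Y \subseteq P_X$'' core is robust.
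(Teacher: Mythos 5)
Your argument is genuinely different from the paper's and, in its core, sound. The paper first proves an auxiliary result (Proposition \ref{crux}, for $T$ acting trivially on a nilpotent normal subgroup $L$ and on $G/L$) and then assembles the general case through a filtration for nilpotent $G$, the nilradical for solvable $G$, and an Iwasawa--Levi analysis plus Dani's algebraicity theorem for the non-solvable case; distality enters there through the implication ``$T^{n_k}(A)\to B$ and $T(B)=B$, hence $T^{n_k}(T(A))\to B$, hence $T(A)=A$''. You instead work directly with a Jordan pair $X,Y$ for $\du T$, note $T^n(P_X)=P_{X+nY}$, and combine the reparametrised limit $\exp((t/n)(X+nY))\to\exp(tY)$ with Ellis' theorem (orbit closures of a distal flow are minimal, hence every point is recurrent) to force $Y\in\mathfrak{p}_X$. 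Your limit computation is the Lie-algebra avatar of the computation $T^{n_k}(x_{t/n_k})=x_{t/n_k}y_{t/n_k}^{n_k}\to y_t$ in Step 4 of the proof of Proposition \ref{crux}, but exploiting distality via recurrence rather than via $T$-invariance of the limit is what collapses the case analysis: no Iwasawa or Levi decomposition, no appeal to the structure of $\Aut(G)$. What the paper's longer route buys is independence from Ellis' structure theorem and a fully explicit treatment; what yours buys is brevity and uniformity.

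Two points need repair before your sketch is a proof. First, the assertion that closedness of $\exp(\R(X+W))$ is ``automatic when $G$ is simply connected'' is false: $SU(3)$ is simply connected, yet irrational windings of its maximal torus are non-closed one-parameter subgroups. Second, the endgame is vaguer than it needs to be; the perturbation by $W\in\ker N$ and the induction on $\dim G$ can both be dispensed with. The correct dichotomy is simply whether $\exp(\R X)$ is closed. If it is, then $\mathfrak{p}_X=\R X$ and $Y\notin\R X$ (if $Y=cX$ then $c$ would be a nonzero eigenvalue of the nilpotent operator $N$), so recurrence gives the contradiction at once. If it is not, its closure $P_X$ is a torus of dimension at least $2$; your recurrence step yields $Y\in\mathfrak{p}_X$ under the distality hypothesis, whence $T(P_X)=P_X$ and $T$ restricts to a nontrivial unipotent automorphism of the torus $P_X$ acting distally on $\spp_{P_X}\subset\spg$. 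Since $\du (T|_{P_X})-\Id$ preserves the lattice $\ker(\exp|_{\mathfrak{p}_X})$, it is an integer matrix in a suitable basis, so the Jordan vector $X'$ there can be chosen rational; then $\exp(\R X')$ is a closed circle and the first branch of the dichotomy applies. With these two repairs your argument is complete and is a genuine simplification of the proof in the paper.
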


In particular, Theorem \ref{unip} illustrates that the converse of Theorem \ref{distal-g} does not 
hold as the non-trivial unipotent automorphisms do not act distally on $\spg$.

Let $\Sub^c_G$ denote the set of all discrete cyclic subgroups of $G$. It is invariant under 
the action of $\Aut(G)$ and so is its closure in $\Sub_G$. In \cite{PaS} and \cite{PaPrS}, distal 
actions of automorphisms of  $\Sub^c_\Gamma$ have been characterised, where $\Gamma$ is 
a certain type of discrete group, e.g.\ a discrete polycyclic group or a lattice in a connected Lie group. 
In a connected Lie group $G$, any closed one-parameter subgroup $H=\{x_t\}_{t\in\R}$ is the limit of 
$\{H_n\}_{n\in\N}$ as $n\to\infty$ in $\Sub_G$, where $H_n=\{x_{m/n}\mid m\in\Z\}$, $n\in\N$, are 
discrete cyclic groups. 
Thus $\spg\subset \ol{\Sub^c_G}$ (the closure of $\Sub^c_G$ in $\Sub_G)$ and the latter is contained 
in $\Sub^a_G$. We should note that $\ol{\Sub^c_G}$ is much bigger than $\spg$ as the former contains 
all discrete cyclic subgroups (see Lemma \ref{discrete}). If $T$ acts distally on $\ol{\Sub^c_G}$, then $T\in$ (NC). 
It is easy to see that Theorems \ref{main}\,(2), \ref{distal-g} and \ref{unip} remain valid if $\spg$ is 
replaced by $\ol{\Sub^c_G}$ in the respective statements.

 The following generalises a part of Theorem 4.1 of \cite{SY3} and also extends Theorem~\ref{main}\,(2).

\begin{thm} \label{general} Let $G$ be a connected Lie group such that it is not a vector group. Let 
$T\in\Aut(G)$ be such that it acts distally on the largest compact connected central subgroup of $G$. 
Then the following are equivalent:
\begin{enumerate}
\item $T$ acts distally on $\spg$.
\item $T$ acts distally on $\ol{\Sub^c_G}$. 
\item $T$ acts distally on $\Sub^a_G$. 
\item $T$ acts distally on $\Sub_G$. 
\item $T$ is contained in a compact subgroup of $\Aut(G)$. 
\end{enumerate}
\end{thm}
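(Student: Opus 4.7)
The plan is to prove the cyclic chain $(5) \Rightarrow (4) \Rightarrow (3) \Rightarrow (2) \Rightarrow (1) \Rightarrow (5)$. The first four implications are routine: $(5) \Rightarrow (4)$ holds because any compact subgroup of $\Aut(G)$ acts on $\Sub_G$ as a compact group of homeomorphisms, which is equicontinuous and hence distal; and $(4) \Rightarrow (3) \Rightarrow (2) \Rightarrow (1)$ follows by restricting distality along the $T$-invariant inclusions $\spg \subset \ol{\Sub^c_G} \subset \Sub^a_G \subset \Sub_G$, noting that Chabauty limits of abelian subgroups are abelian.

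For $(1) \Rightarrow (5)$, suppose $T$ acts distally on $\spg$. By Theorem \ref{distal-g}, whose hypotheses are satisfied since $G$ is not a vector group and $T$ is distal on the maximal compact connected central subgroup $M$, the automorphism $T$ acts distally on $G$. The subgroup $M$ is characteristic and hence $T$-invariant, so $T$ descends to $\bar T \in \Aut(G/M)$; since $M$ is compact, the projection $\phi: G \to G/M$ induces a continuous $T$-equivariant surjection $\Phi: \spg \to \spp_{G/M}$ via $H \mapsto HM/M$. I would first establish the technical claim that $\bar T$ acts distally on $\spp_{G/M}$: given a hypothetical proximal pair $\bar H_1 \neq \bar H_2$ in $\spp_{G/M}$, one selects lifts $H_1 \neq H_2 \in \spg$ (possible as the fibers of $\Phi$ are compact and controlled by $\mathrm{Lie}(M)$), and uses the distality of $T|_M$ on $M$ to promote the proximality in $G/M$ to a proximal pair in $\spg$, contradicting $(1)$.

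Since $G/M$ has no non-trivial compact connected central subgroup (by maximality of $M$), Theorem \ref{main} applies to $\bar T$ on $\spp_{G/M}$. If $G/M$ is non-abelian, Theorem \ref{main}(2) places $\bar T$ in a compact subgroup of $\Aut(G/M)$. If $G/M \cong \R^d$, Theorem \ref{main}(1) yields only $\bar T \in \K\D$ with $\D$ the scalar component of $\GL(d,\R)$; however, since $G$ is not a vector group, $M$ is a non-trivial torus, and I would exploit this by a Lie-algebra computation with $\gf = \mathfrak{m} \oplus \mathfrak{v}$: if the scalar $\lambda$ of $\bar T$ satisfied $|\lambda| \neq 1$, then two distinct closed one-parameter subgroups of $G$ sharing the same $\mathfrak{v}$-direction but differing in their $\mathfrak{m}$-components would be carried by $T$ to the same Chabauty limit, contradicting $(1)$. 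Thus $\bar T$ is in a compact subgroup in either case. A parallel Lie-algebra analysis of the off-diagonal block of $dT$ relative to $\gf = \mathfrak{m} \oplus \mathfrak{v}$ rules out unbounded cocycle growth in $B_n = \sum_{j=0}^{n-1} (dT|_{\mathfrak{m}})^{n-1-j}\, B\, (dT \mod \mathfrak{m})^j$, and combined with $T|_M$ belonging to a finite (hence compact) subgroup of $\Aut(M)$ --- deduced from distality of $T|_M$ on $\spp_M \subset \spg$ via the unipotent obstruction provided by Theorem \ref{unip} applied to the unipotent part of $dT|_\mathfrak{m}$ --- one concludes that $dT$ lies in a compact subgroup of $\GL(\gf)$, whence $T$ is in a compact subgroup of $\Aut(G)$. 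The two main obstacles are the technical transfer of distality across $\Phi$ (which is not automatic for general factor maps) and the detailed Lie-algebra analysis ruling out the scalar and off-diagonal contributions in the case where $G/M$ is a vector group.
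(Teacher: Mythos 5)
Your chain of routine implications and your first two moves for $(1)\Rightarrow(5)$ — invoking Theorem \ref{distal-g} to get distality on $G$, pushing distality down to $\spp_{G/M}$ (this is exactly Proposition \ref{gk} in the paper, proved via the homeomorphism of $\spp_{G/M}$ with the saturated set $\{HM\mid H\in\spg\}$), and applying Theorem \ref{main} to $G/M$ — all match the paper. (For the vector-group case of $G/M$ you work too hard: since $T$ is already distal on $G$, $\ol{T}$ is distal on $G/M$, so the scalar $\alpha$ in $\ol{T}=\psi\circ\alpha\,\Id$ has $|\alpha|=1$ at once; no Chabauty-limit construction is needed.)

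The genuine gap is in your final step. Knowing that $\ol{T}$ lies in a compact subgroup of $\Aut(G/M)$ and that $T|_M$ has finite order does \emph{not} imply that $\du T$ generates a relatively compact subgroup of $\GL(\gf)$: with respect to $\gf=\mathfrak{m}\oplus\mathfrak{v}$ the powers $(\du T)^n$ have off-diagonal blocks $B_n$ that can grow linearly even when both diagonal blocks are periodic (the unipotent case), and "a parallel Lie-algebra analysis rules out unbounded cocycle growth'' is precisely the assertion that needs proof — you give no mechanism for it. The obstruction is a genuinely unipotent one and must be killed by Theorem \ref{unip} applied to an honest \emph{automorphism of $G$}, not merely to $\du T|_{\mathfrak m}$; the difficulty is producing such a unipotent automorphism, since for $G$ with non-trivial central torus $\Aut(G)$ need not be almost algebraic, so no Jordan-type decomposition of $T$ is available inside $\Aut(G)$ a priori. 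The paper's route around this is the content you are missing: use Lemma \ref{aut-inv} to find a maximal torus $K$ of $G$ (lifted from a torus of $G/M$ fixed by a power of $\ol{T}$) with $T^k(K)=K$, show a further power $T^m$ fixes $K$ pointwise using the $G=M$ argument, then invoke the almost algebraicity of $F_K(G)=\{\varrho\in\Aut(G)\mid \varrho|_K=\Id\}$ (from \cite{DS2}) together with Abels' structure theorem to write a power of $T$ as a commuting product $\phi\circ S$ with $\phi\in\Bc$ compact and $S$ unipotent in $\Aut(G)$; since $\Bc$ is compact, $S$ inherits distality on $\spg$, and Theorem \ref{unip} forces $S=\Id$. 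Without this detour through an invariant maximal torus and an almost algebraic subgroup, your argument does not close.
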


We may note that in case $G$ is a vector group, Theorem \ref{main}\,(1) shows that $(1)$ and $(5)$ above are not equivalent;
however, in this case, statements $(2-5)$ above are equivalent by Theorem 4.1 of \cite{SY3} as (2) above implies that 
$T\in ({\rm NC})$.

Recall that a group $\Gamma$ of homeomorphisms of a (Hausdorff) topological space $X$ acts distally on $X$ if 
$\ol{\{(\gamma(x),\gamma(y))\mid \gamma\in\Gamma\}}\cap \{(d,d)\mid\ d\in X\}=\emptyset$ for any $x,y\in X$ with $x\ne y$. 
In particular, a homeomorphism $T$ of $X$ acts distally on $X$ if and only if the cyclic group $\{T^n\mid n\in\Z\}$ acts distally on 
$X$. For a Lie group $G$, any compact subgroup of $\Aut(G)$ acts distally on $\Sub_G$ (see e.g.\ Lemma 2.2 in \cite{SY2}). 
It follows from Theorem 1.1 of \cite{Ab2} that for a subgroup $\Hc$ of $\Aut(G)$, 
$\Hc$ acts distally on $G$ if and only if $\Hc$ acts distally on the Lie algebra of $G$, 
and these statements are equivalent to the statement that every element of $\Hc$ acts distally on $G$ (see also \cite{CG}). 
Let $\ol{\Hc}$ denote the closure of $\Hc$ in $\Aut(G)$. It is easy to see using Abel's results in \cite{Ab1, Ab2} that 
$\Hc$ acts distally on $G$ if and only if so does $\ol{\Hc}$. It follows from Ellis' result in \cite{E} that 
$\Hc$ acts distally on any compact $\Hc$-invariant subspace of $\Sub_G$ if and only if so does $\ol{\Hc}$. 

 Using Theorems \ref{main} and 
\ref{general}, we get the following theorem for the action of any subgroup $\Hc$ of $\Aut(G)$. If $\Hc$ is closed, 
the theorem shows that $\Hc$ acts distally on $\spg$ if and only if so does every element of $\Hc$.

\begin{thm} \label{subgp-action} Let $G$ be a connected Lie group. Let $\Hc$ be a subgroup of $\Aut(G)$ 
such that it acts distally on the largest compact connected central subgroup of $G$. 
Consider the following statements:
\begin{enumerate}
\item Every element of $\ol{\Hc}$ acts distally on $\spg$. 
\item $\Hc$ acts distally on $\spg$.
\item $\Hc$ acts distally on $\ol{\Sub^c_G}$. 
\item $\Hc$ acts distally on $\Sub^a_G$. 
\item $\Hc$ acts distally on $\Sub_G$. 
\item $\ol{\Hc}$ is a compact group.
\end{enumerate}
\noindent Then, $(1)$ and $(2)$ are equivalent and $(3-6)$ are equivalent.\\ 
If $G$ is not a vector group, then $(1-6)$ are equivalent.\\
If $G$ is a vector group, i.e.\ if $G=\R^d$, for some $d\in\N$, then $(1)$ and $(2)$ are equivalent to the following:
\begin{enumerate}
\item[(7)] $\Hc\subset \K\D$, where $\K$ is a compact subgroup of $\GL(d,\R)$ and $\D$ is the center of $\GL(d,\R)$.
\end{enumerate}
\end{thm}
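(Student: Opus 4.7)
The plan is to close two cycles, $(1)\Leftrightarrow(2)$ and $(3)\Leftrightarrow(4)\Leftrightarrow(5)\Leftrightarrow(6)$, to merge them in the non-vector case, and to handle the vector case by relating $(1)$ directly to $(7)$. The easy links come from the chain $\spg\subseteq\ol{\Sub^c_G}\subseteq\Sub^a_G\subseteq\Sub_G$ of closed $\Hc$-invariant subspaces (yielding $(5)\Rightarrow(4)\Rightarrow(3)\Rightarrow(2)$), from Lemma 2.2 of \cite{SY2} (yielding $(6)\Rightarrow(5)$), and from Ellis' theorem as quoted in the excerpt, which promotes distality of $\Hc$ on $\spg$ to distality of $\ol{\Hc}$ on $\spg$, and hence of every cyclic subgroup of $\ol{\Hc}$, giving $(2)\Rightarrow(1)$.

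The substantive step is $(3)\Rightarrow(6)$. Assuming $(3)$, Ellis again gives a distal action of $\ol{\Hc}$ on $\ol{\Sub^c_G}$, hence of every $T\in\ol{\Hc}$; the hypothesis on the maximal central torus transfers to $\ol{\Hc}$ in the same way. Applying Theorem \ref{general} (or Theorem 4.1 of \cite{SY3} in the vector case, noting that distality on $\ol{\Sub^c_G}$ places $T$ in the class (NC)) element-by-element places every $T\in\ol{\Hc}$ inside a compact subgroup of $\Aut(G)$, so each acts distally on $G$; Abels' Theorem 1.1 in \cite{Ab2} then upgrades this to distality of $\ol{\Hc}$ itself on $G$, equivalently on its Lie algebra $\gf$. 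The crux is the final step: one invokes the structure theorem for closed distal subgroups of $\GL(\gf)$, which decomposes such a subgroup as an extension of a relatively compact (elliptic) part by a normal unipotent part. Since every element of $\ol{\Hc}$ lies in a compact subgroup of $\Aut(G)$, Theorem \ref{unip} forces any unipotent element of $\ol{\Hc}$ to be trivial, so the unipotent part is absent and $\ol{\Hc}$ is itself compact, i.e.\ $(6)$.

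In the non-vector case, the same template starting from $(1)$ instead of $(3)$, and invoking Theorem \ref{general} directly to pass from distal action on $\spg$ to element-wise compactness, yields $(1)\Rightarrow(6)$ and thus merges both cycles into the equivalence of $(1)$--$(6)$. For $G=\R^d$, the direction $(7)\Rightarrow(1)$ is immediate since $\D$ acts trivially on $\spg\cong\RP^{d-1}\sqcup\{\{0\}\}$ and any compact $\K$ acts distally on a compact space; for $(1)\Rightarrow(7)$, Theorem \ref{main}(1) pointwise places each $T\in\ol{\Hc}$ in some $\K_T\D$. Projecting to $\GL(d,\R)/\D$ and passing to closure produces a closed subgroup acting distally on $\RP^{d-1}\subset\spg$ in which every element lies in a compact subgroup; the same distal-linear-group decomposition (nontrivial unipotents ruled out by pointwise compact closure) forces this image to be compact, and lifting yields a single compact $\K\subset\GL(d,\R)$ with $\ol{\Hc}\subseteq\K\D$.

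The main obstacle throughout is precisely the promotion from ``every element of $\ol{\Hc}$ lies in a compact subgroup of $\Aut(G)$'' to ``$\ol{\Hc}$ itself is compact.'' This is where the distal-linear-group structure theorem, combined with Theorem \ref{unip} to eliminate nontrivial unipotent contributions, does the essential work; everything else is routine bookkeeping that chains the single-element theorems through Ellis' theorem.
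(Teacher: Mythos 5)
Your overall architecture matches the paper's: the easy inclusions give $(5)\Rightarrow(4)\Rightarrow(3)\Rightarrow(2)$, Ellis' theorem reduces everything to the closed group $\ol{\Hc}$ and gives $(2)\Leftrightarrow(1)$, and Theorems \ref{general}, \ref{main}\,(1) and Theorem 4.1 of \cite{SY3} are applied element by element. You also correctly identify the crux: passing from ``every element of $\ol{\Hc}$ lies in a compact subgroup of $\Aut(G)$'' to ``$\ol{\Hc}$ is compact.'' But your proposed resolution of that crux is wrong. The structure theorem for closed distal subgroups $H$ of $\GL(V)$ produces an $H$-invariant flag on whose successive quotients $H$ acts with \emph{relatively compact image}, with a unipotent kernel $N$; when $N$ is trivial this only says that $H$ injects continuously into a compact group, which does not make $H$ compact. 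Concretely, take $\theta\in\R$ with $\theta/2\pi$ irrational and let $g\in\GL(4,\R)$ be the block matrix $\left(\begin{smallmatrix}R_\theta& I\\ 0&R_\theta\end{smallmatrix}\right)$, where $R_\theta$ is the rotation by $\theta$. Then $\|g^n\|\to\infty$, so $\langle g\rangle\cong\Z$ is closed and discrete; all eigenvalues of $g^n$ are $e^{\pm in\theta}$, so the group is distal and contains no nontrivial unipotent element; its ``unipotent part'' relative to the invariant flag is trivial and its image on the graded pieces is relatively compact — yet the group is noncompact. So ``closed $+$ distal $+$ no nontrivial unipotents $\Rightarrow$ compact'' is false, and killing the unipotents via Theorem \ref{unip} does not finish the argument. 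What you actually need is the hypothesis you derived but then set aside: \emph{every} element of the closed group generates a relatively compact subgroup. The paper invokes precisely the theorem that a closed subgroup of $\GL(d,\R)$ all of whose elements are elliptic is compact (Theorem 1.1 of \cite{FP}, or Proposition 2 of \cite{SY1}); this is a genuinely different and indispensable input (the paper's remark about Bass' example in $\SL(4,\C)$ shows closedness is essential there too).

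A secondary, related shortfall is the vector-group implication $(1)\Rightarrow(7)$, which you compress into one sentence. The paper's argument here is genuinely delicate: it first shows $\Hc_1=\ol{[\Hc,\Hc]}$ is compact (determinant $1$ forces the scalar $r=\pm1$, then again \cite{FP}), then works inside the normaliser $N(\Hc_1)$, which is algebraic and hence has finitely many components, so that the closed abelian group $\ol{\rho(\Hc)}$ in $N(\Hc_1)/\Hc_1\D$ is compactly generated and has a unique maximal compact subgroup containing $\rho(\Hc)$; finally it uses Iwasawa's splitting (Lemma 3.7 of \cite{I}) to write $\ol{\Hc\D}=\K\times\D^0$ and conclude $\Hc\subset\K\D$. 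Your sketch (``project to $\GL(d,\R)/\D$, apply the same decomposition, lift'') runs into the same gap as above at the compactness step and, in addition, does not address why a compact subgroup of $\GL(d,\R)/\D$ lifts to $\K\D$ with $\K$ compact — that is exactly where the central vector group $\D^0$ has to be split off. The skeleton of your proposal is right, but the load-bearing step needs the Fresnel--van der Put theorem rather than the distal structure theorem.
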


Note that in Theorem \ref{subgp-action}, one can also have equivalent statements similar to $(1)$ and $(2)$, where $\spg$ 
is replaced by any of $\ol{\Sub^c_G}$, $\Sub^a_G$ or $\Sub_G$. 
We may note here that one can not replace $\ol{\Hc}$ by $\Hc$ in (1) of Theorem \ref{subgp-action}, 
as there is a subgroup $\Hc$ of $\SL(4,\C)$ in which every element of $\Hc$ is contained in a compact subgroup 
of $\SL(4,\C)$ but the closure of $\Hc$ in $\SL(4,\C)$ is connected and contains unipotent elements (cf.\ \cite{Ba}). 
The condition that $T$ (resp.\ a subgroup $\Hc$ of $\Aut(G)$) acts distally on the maximal compact connected central 
subgroup (maximal central torus) of $G$ is satisfied easily if $T\in\Aut(G)^0$, 
(resp.\ $\Hc\subset \Aut(G)^0$), as $\Aut(G)^0$ acts trivially on any compact central subgroup. It is also satisfied if 
$T^n\in\Aut(G)^0$ for some $n\in\N$ (resp.\ $\Hc$ or $\Hc\Aut(G)^0$ has 
finitely many connected components). Here, $\Aut(G)^0$ denotes the connected component of the identity in $\Aut(G)$.

Let $\Inn(G)$ denote the group of inner automorphisms of $G$. It is a (not necessarily closed) normal 
subgroup of $\Aut(G)$. The following corollary generalises the first part of Corollary 4.5 in \cite{SY3}. 

\begin{cor} \label{inner}
Let $G$ be a connected Lie group. Then the following are equivalent: 
\begin{enumerate}
\item Every inner automorphism of $G$ acts distally on $\spg$.
\item Every inner automorphism of $G$ acts distally on $\Sub_G$. 
\item $\Inn(G)$ acts distally on $\spg$.
\item $\Inn(G)$ acts distally on $\Sub_G$. 
\item $G$ is compact or $G=\R^n\times K$, for some $n\in\N$ and the maximal compact normal 
subgroup $K$ of $G$. 
\end{enumerate}
\end{cor}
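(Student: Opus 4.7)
The plan is to handle $(5) \Rightarrow (4) \Rightarrow (3) \Rightarrow (1)$ and $(4) \Rightarrow (2) \Rightarrow (1)$ by formal means, and then close the circuit with the substantive implication $(1) \Rightarrow (5)$. For $(5) \Rightarrow (4)$: if $G$ is compact, then $\Inn(G)$ is a continuous image of $G$ and hence compact; if $G = \R^n \times K$ with $K$ compact, then $\R^n$ is central, so $\Inn(G) \cong \Inn(K)$ is again compact, and Lemma 2.2 of \cite{SY2} gives that any compact subgroup of $\Aut(G)$ acts distally on $\Sub_G$. The remaining easy implications are immediate because $\spg$ is an $\Aut(G)$-invariant closed subset of $\Sub_G$, and distality is inherited under restriction to invariant subspaces and to individual elements.

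For $(1) \Rightarrow (5)$, assume every $\inn(g)$ acts distally on $\spg$. If $G = \R^d$ is a vector group then $\Inn(G)$ is trivial and $(5)$ holds with $K = \{e\}$, so assume $G$ is not a vector group. Inner automorphisms fix $Z(G)$ pointwise and hence act trivially on the maximal central torus, so the hypothesis of Theorem \ref{general} is satisfied; applying it to each $T = \inn(g)$ forces $\inn(g)$ into a compact subgroup of $\Aut(G)$. The continuous adjoint map $\Aut(G) \to \GL(\gf)$ then yields that $\{\Ad(g)^n : n \in \Z\}$ is relatively compact in $\GL(\gf)$ for every $g \in G$; applying this with $g = \exp(tX)$ for all $t \in \R$ shows the entire one-parameter subgroup $\{\exp(t\,\ad X) : t \in \R\}$ is relatively compact, which forces $\ad X$ to be semisimple with purely imaginary eigenvalues for every $X \in \gf$.

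By standard Lie-algebraic structure theory, $\gf$ is then reductive, $\gf = \gf_s \oplus Z(\gf)$ with $\gf_s$ a compact semisimple Lie algebra (its Killing form being negative definite). Let $G_s$ be the analytic subgroup corresponding to $\gf_s$; Weyl's theorem gives that the simply-connected cover of $G_s$ is compact, whence $G_s$ itself is compact. Writing $Z(G)^0 = \R^n \times T^m$ with $T^m$ the maximal central torus, we have $G = G_s \cdot Z(G)^0$; since $G_s \cdot T^m$ is compact and $\R^n$ contains no non-trivial compact subgroup, $\R^n \cap (G_s \cdot T^m) = \{e\}$, so using centrality of $\R^n$ we obtain the direct product decomposition $G = \R^n \times K$ with $K = G_s \cdot T^m$ compact and normal. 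Any compact normal subgroup of $G$ projects trivially to the $\R^n$ factor and thus lies in $K$, so $K$ is the maximal compact normal subgroup, giving $(5)$. The main obstacle is the Lie-algebraic structural step: distilling the compact-plus-abelian-centre decomposition from the pointwise condition that every $\ad X$ is semisimple with imaginary spectrum, and then cleanly separating the $\R^n$ direction from the maximal central torus sitting inside $Z(G)^0$.
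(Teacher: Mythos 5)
Your proposal is correct, and your handling of the formal implications matches the paper's; but your proof of the substantive implication $(1)\implies(5)$ takes a genuinely different route. The paper first invokes Theorem \ref{distal-g} to conclude that $\Inn(G)$ acts distally on $G$, then imports growth-theoretic results of Rosenblatt and Jenkins to conclude that $G/R$ is compact, uses Theorem \ref{unip} to show that $\inn(x)=\Id$ for every $x$ in the nilradical $N$ (whence $N$, and then the whole radical, is abelian and central), and finishes with Iwasawa's splitting lemma applied to the compact extension $G/N$ over $N=\R^n\times M$. You instead apply the full strength of Theorem \ref{general} pointwise to make each $\du(\inn(g))$ elliptic in $\GL(\gf)$, deduce that every $\mathrm{ad}\,X$ is semisimple with purely imaginary spectrum, and then argue structurally on $\gf$. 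Your route trades the external Rosenblatt--Jenkins input for a Lie-algebraic classification step, which you flag as the main obstacle and leave unproved; it does go through: for $X$ in the nilradical $\mathfrak{n}$ the operator $\mathrm{ad}\,X$ is nilpotent, hence zero when also semisimple, so $\mathfrak{n}=Z(\gf)$; then $[\mathfrak{r},\mathfrak{r}]\subset\mathfrak{n}=Z(\gf)$ makes the radical $\mathfrak{r}$ two-step nilpotent, hence $\mathfrak{r}=\mathfrak{n}=Z(\gf)$ and $\gf$ is reductive; and in a Cartan decomposition $\mathfrak{s}=\mathfrak{k}\oplus\mathfrak{p}$ of the semisimple part, any nonzero $X\in\mathfrak{p}$ has $\mathrm{ad}\,X$ self-adjoint with a nonzero real eigenvalue, forcing $\mathfrak{p}=0$, so Weyl's theorem applies. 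Your argument is not logically lighter overall, since Theorem \ref{general} is itself proved via Theorems \ref{distal-g} and \ref{unip}; but it is self-contained modulo the paper's main theorems and standard structure theory, whereas the paper's proof is shorter at the corollary level at the price of two further citations. Both arguments correctly identify $K$ as the maximal compact normal subgroup at the end.
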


In Corollary \ref{inner}, one can also write equivalent statements involving the action on $\ol{\Sub^c_G}$. 

Note that $\spg$ is contained in a larger compact set $\Sub^{co}_G$, which is the closure of the set of all closed connected 
subgroups of $G$. Hence it is easy to show that Theorems \ref{main}--\ref{unip} are all valid if $\spg$ is replaced by
$\Sub^{co}_G$. In Theorems \ref{general} and \ref{subgp-action} and Corollary \ref{inner}, the action of a given 
automorphism or a subgroup of $\Aut(G)$ is distal on $\Sub^{co}_G$ if and only if it is so on $\spg$, under the condition 
on the group and the action on the central torus. 

Let us note some related known results. Ellis \cite{E} has shown that a semigroup $\Gamma$ of homeomorphisms of 
a compact space $X$ acts distally on $X$ 
if and only if the closure of $\Gamma$ in $X^X$ is a group. Note that minimal distal actions on compact metric 
spaces are characterised by Furstenberg \cite{Fu}. For many groups $G$, compact spaces $\Sub_G$, 
Sub$^a_G$ and $\Sub^c_G$ are identified (see Baik-Clavier \cite{BC1, BC2}, Bridson et al \cite{BHK}, Hamrouni and Kadri 
\cite{HamKa}, Kloeckner \cite{Kl} and also Pourezza and Habbard \cite{PH}); 
one can also identify $\spg$ for some groups $G$. The action of $\Aut(G)$ on some subspaces of $\Sub_G$ for the 3-dimensional 
Heisenberg group $G$ has also been described in detail \cite{BHK}. The space of $G$-invariant measures 
on Sub$_G$ and in particular, ergodic $G$-invariant measures on Sub$_G$ for some $G$ have also been studied extensively (see 
Gelander \cite{G} and the references cited therein). Moreover, the study of the action of $\Aut(G)$ on $\Sub_G$ and on 
its closed (compact) invariant subspaces leads to a better understanding of dynamics on these spaces. 

We now fix some notations. Let $G$ be a connected Lie group. Let $R$ (resp.\ $N$) denote the radical (resp.\ nilradical) of $G$, 
i.e.\ the largest connected solvable (resp.\ nilpotent) normal subgroup of $G$. Both these subgroup are characteristic in $G$. 
Recall that $G$ is said to be semisimple if its radical is trivial. In particular, $G/R$ is semisimple unless $G$ is solvable. 
For a closed subgroup $H$ of $G$, 
let $H^0$ denote the connected component of the identity $e$ in $H$, $[H,H]$ denote the commutator subgroup of $H$. Note that 
 $H^0$, $[H,H]$ and its closure $\ol{[H,H]}$ are characteristic in $H$. Let $\gf$ denote the Lie algebra of the connected Lie group $G$. 
 There is an exponential map $\exp:\gf\to G$ which is a continuous map and its restriction to a small open 
 neighbourhood of $0$ in $\gf$ is a homeomorphism onto an open neighbourhood of the identity $e$ in $G$. 
 Given $T\in\Aut(G),$ there exists a corresponding Lie algebra automorphism 
 $\du T : \gf\to \gf$ such that $\exp\circ\,\du T =T\circ\,\exp$.  The map 
 $\du: \Aut(G) \rightarrow \Aut(\gf)$, $T \mapsto \du T$ is an injective continuous 
 homomorphism and its image is closed, and in fact, $\du$ is a homeomorphism onto its image 
 (cf.\ \cite{Ch}, see also \cite{Hoc})
It is known that $\Aut(\gf)$ is a closed subgroup of $\GL(\gf)$, and we can view $\Aut(G)$ 
as a closed subgroup of $\GL(\gf)$. The topology on $\Aut(G)$, when considered 
as a subspace topology of $\GL(\gf)$, coincides with the (modified) compact-open topology (see \cite{St06}). 
We refer the reader to \cite{Hoc2} for the basic structure theory of Lie groups.

In \S 2, we study general properties of $\Sub_G$ and prove some elementary results about the structure of $\spg$. 
In \S 3, we discuss some properties and actions of automorphisms of a connected Lie group $G$ on $G$ as well as 
$\spg$ and discuss some preliminary results. In \S4 we prove the main results stated above. 

\section{Properties and structure of \texorpdfstring{${\mathrm{Sub}}_G$}{Sub G} and 
\texorpdfstring{${\mathrm{Sub}}^p_G$}{Sub p G}}

In this section, we discuss some basic properties of the Chabauty topology on the space $\Sub_G$. 
We also discuss some elementary results about the structure of  $\Sub_G$ and $\spg$.

Let $L$ be a locally compact Hausdorff topological group.
For a compact set $K$ and an open set $U$ in $L$, let 
$U_1(K)=\{ H \in \Sub_L \mid H \cap K = \emptyset \}$ and 
$U_2(U)= \{ H \in \Sub_L \mid H \cap U \neq \emptyset \}$. Here, 
$\{ U_1(K) \mid K \mbox { is compact} \} \cup \, \{ U_2(U) \mid U \mbox{ is open} \}$ 
is a sub-basis of the Chabauty topology on $\Sub_L$. As mentioned earlier, $\Sub_L$ is 
compact and Hausdorff, and if $L$ is second countable, then $\Sub_L$ is metrisable 
(cf.\  \cite{BP}, Lemma E.1.1). In particular $\Sub_L$ is metrisable if $L$ is a connected 
Lie group. For a closed subgroup $H$ of $L$, it is easy to see that $\Sub_H$ carries 
the subspace topology of $\Sub_L$. The following lemma is known (see e.g.\ Proposition E.1.2 in \cite{BP}) 

\begin{lem}\label{conv}
Let $G$ be a connected Lie group. A sequence 
$\{H_n\}\subset \Sub_G$ converges to $H\in \Sub_G$ 
if and only if the following hold:
\begin{enumerate}
\item[${\rm(I)}$] For $g\in G$, if there exists a subsequence $\{H_{n_k}\}$ of 
$\{H_n\}$ with $h_k\in H_{n_k}$, $k\in\N$, such that $h_k\to g$ in $G$, then $g\in H$. 
\item[${\rm(II)}$] For every $h\in H$, there exists a sequence $\{h_n\}_{n\in\N}$ such that
 $h_n\in H_n$, $n\in\N$, and $h_n\to h$.
\end{enumerate}
\end{lem}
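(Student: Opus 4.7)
The plan is to establish the two directions separately, using the sub-basis for the Chabauty topology recalled just before the lemma, and exploiting the fact that $\Sub_G$ is metrisable since $G$ is second countable so that sequential convergence is sufficient to test the topology.

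For the forward implication, assume $H_n\to H$ in $\Sub_G$. To prove (I), I would argue by contradiction: suppose $h_k\in H_{n_k}$, $h_k\to g$, but $g\notin H$. Since $H$ is closed and $G$ is locally compact, choose a compact neighbourhood $K$ of $g$ with $K\cap H=\emptyset$, so $H\in U_1(K)$. As $U_1(K)$ is open and $H_{n_k}\to H$, eventually $H_{n_k}\cap K=\emptyset$; but $h_k\to g$ forces $h_k\in K$ for large $k$, a contradiction. To prove (II), fix $h\in H$ and a countable neighbourhood basis $\{U_m\}_{m\in\N}$ of $h$ (available since $G$ is metrisable). Each $U_2(U_m)$ contains $H$, is open, and therefore eventually contains all $H_n$; by a standard diagonal argument I can then pick $h_n\in H_n$ with $h_n\to h$.

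For the reverse implication, assume (I) and (II). Since the sets $U_1(K)$ and $U_2(U)$ form a sub-basis, it suffices to show that any such open set containing $H$ eventually contains $H_n$. If $H\in U_1(K)$, i.e.\ $H\cap K=\emptyset$, and $H_n\notin U_1(K)$ for infinitely many $n$, then a subsequence $H_{n_k}$ contains points $h_k\in K$. By compactness of $K$, after passing to a further subsequence, $h_k\to g\in K$, and hypothesis (I) gives $g\in H\cap K$, contradicting $H\cap K=\emptyset$. If $H\in U_2(U)$, pick $h\in H\cap U$; hypothesis (II) supplies $h_n\in H_n$ with $h_n\to h$, so $h_n\in U$ eventually, giving $H_n\in U_2(U)$ for large $n$.

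Since metrisability reduces convergence in $\Sub_G$ to sequential convergence, this pair of arguments covers every open set in a sub-basis around $H$ and hence yields $H_n\to H$. The only mildly delicate point is the use of local compactness of $G$ to separate $g\notin H$ from $H$ by a compact neighbourhood in the proof of (I); everything else is a direct unwinding of the sub-basis. No step presents a real obstacle, so the proof should fit in a few lines.
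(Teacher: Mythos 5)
Your argument is correct. Note that the paper itself gives no proof of this lemma; it is stated as known and attributed to Proposition E.1.2 of \cite{BP}, so there is no in-paper proof to compare against. Your write-up is essentially the standard argument from that reference: the forward direction separates a point $g\notin H$ from the closed set $H$ by a compact neighbourhood (using local compactness) to exploit $U_1(K)$, and uses a countable neighbourhood basis plus a diagonal choice to produce the approximating sequence for (II); the reverse direction checks eventual membership on the sub-basic sets $U_1(K)$ and $U_2(U)$, which suffices since a basic open set is a finite intersection of sub-basic ones. All steps are sound; the appeal to sequential compactness of $K$ is justified because a connected Lie group is metrisable, and metrisability of $\Sub_G$ justifies working with sequences throughout.
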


The following lemma is a consequence of the well-known result that the set of discrete subgroups of a connected Lie group 
$G$ is open in $\Sub_G$. We will give a short proof for the sake of completeness.

\begin{lem} \label{discrete}
Let $G$ be a connected Lie group.  Then the trivial subgroup $\{e\}$ is isolated in $\Sub^{co}_G$, and so in $\spg$. Moreover, 
$\Sub^{co}_G$ does not contain any non-trivial discrete subgroup.
\end{lem} 

\begin{proof} Let $\mathfrak{C}$ be the set of closed connected subgroups of $G$. We have $\ol{\mathfrak{C}} = \Sub^{co}_G$. 
By Proposition E.1.5 in \cite{BP} (see also Proposition 3.4(iii) in \cite{BHK}, or also Corollary of Lemma 1.2 in
\cite{W}) the set $\mathfrak{D}$ of discrete subgroups of $G$ is open in $\Sub_G$ and so $\{\{e\}\} \subseteq\mathfrak{D}\cap \Sub^{co}_G
=\mathfrak{D} \cap \ol{\mathfrak{C}}\subseteq\ol{\mathfrak{D}\cap\mathfrak{C}} = \{\{e\}\}$. Hence the assertions hold.
\end{proof}.

Note that the action of $\Aut(G)$ on $\Sub_G$ is continuous for connected Lie groups $G$ (cf.\ \cite{SY3}, Lemma 2.4). 
In particular, if $T$ generates a relatively compact group in $\Aut(G)$, then it acts distally on $G$ 
as well as on $\Sub_G$ (see e.g.\ \cite{PaS}, Lemma 4.2 for the latter statement). 

A (continuous real) one-parameter subgroup in $G$ is the image of a continuous 
homomorphism from $\mathbb{R}$ to $G$. Any one-parameter subgroup is 
of the form $\{\exp tX\}_{t\in\R}$ for some $X$ in the Lie algebra $\gf$ of $G$. Moreover, if $H$ is a one-parameter 
subgroup in $G$, then so is $T(H)$ for any $T\in\Aut(G)$. Let 
$$
\cP_1(G):= \{ A \mid A  \mbox{ is a closed one-parameter subgroup of } G \}.$$ 
Now $\spg$ is the closure (in $\Sub_G$) of the set $\cP_1(G)$, i.e.\ 
$\spg : = \ol{\cP_1(G)}$, which is  compact and invariant under the action of $\Aut(G)$. 

Note that if any one-parameter subgroup in a connected Lie group is not closed, then its closure is compact 
and isomorphic to an $n$-dimensional torus $\mbb{T}^n=(\mbb{S}^1)^n$ for some $n\in\N$ with $n\geq 2$. 
If it is closed and bounded (compact), then it is isomorphic to 
$\mbb{S}^1$. If it is unbounded (not relatively compact), then it is closed and isomorphic to $\R$. 

For $\{x_t\}_{t\in\R}$, which may be a continuous one-parameter set or a one-parameter subgroup, we may just write 
$\{x_t\}$ for brevity, i.e.\ we may omit the parameter set if there is no ambiguity.

Now we prove some elementary results about the structure of $\spg$. The following lemma shows that if $G$ contains 
any non-trivial compact connected abelian subgroup of   
dimension at least 2, then $\cP_1(G)$ is not closed in $\Sub_G$. 

\begin{lem} \label{subpg}
Let $G$ be a connected Lie group. Then the following hold: 
\begin{enumerate}
\item All compact connected abelian subgroups of $G$ belong to $\spg$.
\item If $\{x_t\}\in\cP_1(G)\subset\spg$ and if $K$ is a compact connected abelian subgroup of $G$ 
which is centralised by each $x_t$. 
Then $\{x_t\}K\in\spg$.
\end{enumerate}
\end{lem}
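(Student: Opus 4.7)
For part (1), let $K$ be a compact connected abelian subgroup of $G$, so $K\cong\mbb{T}^n$ for some $n\geq 0$. For $n\in\{0,1\}$, $K$ itself is a closed one-parameter subgroup (possibly trivial), hence lies in $\cP_1(G)\subset \spg$. For $n\geq 2$, I would fix a basis $X_1,\ldots,X_n$ of the Lie algebra of $K$ with $\exp X_i=e$, so that $K$ is identified with $(\R X_1\oplus\cdots\oplus\R X_n)/\bigoplus_i \Z X_i$, and set
\[
Y_m \;=\; X_1 + \tfrac{1}{m}X_2 + \tfrac{1}{m^2}X_3 + \cdots + \tfrac{1}{m^{n-1}}X_n,
\]
so that $H_m=\{\exp(tY_m):t\in\R\}$ is a closed one-parameter subgroup of period $m^{n-1}$. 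Convergence $H_m\to K$ in the Chabauty sense is then checked against Lemma \ref{conv}: condition (I) is automatic since $H_m\subset K$, while for condition (II), writing $t=c_1+k$ with $k\in\Z$ and expanding $k$ in base $m$ as $k=\sum_{i=1}^{n-1}k_im^{i-1}$ with $k_i\in\{0,\ldots,m-1\}$, one can choose the $k_i$ greedily at each coordinate level so that $\exp(tY_m)$ approximates any prescribed $(c_1,\ldots,c_n)\in\mbb{T}^n$ with error $O(1/m)$.

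For part (2), observe first that if $\{x_t\}$ is compact, or if $\{x_t\}\cong\R$ but $\{x_t\}\cap K\neq\{e\}$, then $\{x_t\}K$ fits in a short exact sequence of compact groups and is itself a compact connected abelian subgroup of $G$, so it lies in $\spg$ by part (1). The essential remaining case is $\{x_t\}\cong\R$ with $\{x_t\}\cap K=\{e\}$, in which $\{x_t\}K\cong\R\times K$ as topological groups. Let $X\in\gf$ generate $\{x_t\}$; the centralization hypothesis, differentiated, yields $[X,\mathfrak{t}]=0$, where $\mathfrak{t}$ is the Lie algebra of $K$. I would choose $Y=\sum_i\alpha_iX_i\in\mathfrak{t}$ with $\alpha_1,\ldots,\alpha_n$ linearly independent over $\Q$, so that $\{\exp(sY):s\in\R\}$ is dense in $K$, and set
\[
L_m \;:=\; \{\exp(t(X+mY)):t\in\R\} \;=\; \{x_t\exp(tmY):t\in\R\}.
\]
Then $L_m$ is closed in $G$ because the defining homomorphism $t\mapsto x_t\exp(tmY)$ is injective (since $\{x_t\}\cap K=\{e\}$) and proper (since $K$ is compact and $\{x_t\}$ is closed in $G$). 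For the convergence $L_m\to\{x_t\}K$ in Chabauty, condition (I) is handled by the same compactness-plus-properness argument; for condition (II), given $x_a\exp(\sum c_iX_i)\in\{x_t\}K$, I would take $t_m=s_m/m$ with $s_m\in[ma-\sqrt m,\,ma+\sqrt m]$ chosen so that $(s_m\alpha_1,\ldots,s_m\alpha_n)\bmod\Z^n$ is $o(1)$-close to $(c_1,\ldots,c_n)$, which is possible by Kronecker equidistribution on an interval of length $2\sqrt m\to\infty$; then $t_m\to a$ and $\exp(mt_mY)=\exp(s_mY)\to\exp(\sum c_iX_i)$.

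The principal obstacle in both parts is the Diophantine content of condition (II) of Lemma \ref{conv}: every point of the target subgroup has to be realized as a limit of points lying on a single one-dimensional approximant. In (1) this is arranged by letting the generator's coordinates decrease geometrically so that a full period sweeps out a $1/m$-dense grid in $\mbb{T}^n$; in (2) by scaling a dense direction $Y$ by $m$ and using Kronecker density on a $\sqrt m$-long window around $ma$. The remaining ingredients—closedness of $H_m$ and $L_m$, the commutation relation $[X,\mathfrak{t}]=0$, and the reduction to the non-compact subcase in (2)—are routine consequences of the hypotheses and the structure theory of connected abelian Lie groups.
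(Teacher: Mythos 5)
Your proof is correct and takes essentially the same route as the paper's: part (1) approximates the torus by closed, densely winding circle subgroups (the paper invokes density of torsion elements and monotheticity where you write down the explicit generators $Y_m$), and in part (2) your approximants $L_m=\{\exp(t(X+mY))\}$ coincide, up to reparametrisation, with the paper's $A_n=\{x_{t/n}k_t\}$, the paper identifying the limit by subsequential arguments rather than your direct Kronecker estimate on a growing window. One cosmetic remark: the subcase ``$\{x_t\}\cong\R$ with $\{x_t\}\cap K\neq\{e\}$'' is vacuous, since a closed subgroup isomorphic to $\R$ meets a compact subgroup trivially, so no separate argument is needed there.
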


\begin{proof}
Let $K$ be a compact connected abelian subgroup of $G$. Then the set $F$ of all finite order elements in $K$ is 
dense in $K$ and each such element of $F$ is contained in a closed one-parameter subgroup of $K$. 
Since $K$ is monothetic (i.e.\ it has a dense cyclic subgroup), we get that $K$ is a limit of a sequence of 
compact one-parameter subgroups of $K$ in $\Sub_K$. Therefore, $K\in\spp_K\subset\spg$. Thus (1) holds. 

Let $\{x_t\}\in\cP_1(G)\subset \spg$ be such that each $x_t$ centralises $K$. If $\{x_t\}$ is compact, then 
$\{x_t\}K$ is a compact connected abelian subgroup of $G$ and hence $\{x_t\}K\in\spg$ from (1). 
Now suppose $\{x_t\}$ is isomorphic to $\R$. Let $k\in K$ be such that it generates a 
dense cyclic subgroup in $K$. Since $K$ is exponential, there exists a one-parameter subgroup 
$\{k_t\}$, which is dense in $K$ and $k=k_1$. Let 
$A_n=\{a_t(n)=x_{t/n}k_t\}_{t\in\R}$. Then each $A_n$ is a closed one-parameter subgroup 
in $G$, and passing to a subsequence if necessary, we get that $A_n\to A$ for some subgroup $A\in\spg$. 
Note that $\{x_t\}\times K$ is a closed subgroup of $G$ and it contains each $A_n$. Therefore,  
$A\subset \{x_t\}\times K$. Let $t\in\R$ be fixed. Since $a_t(n)=x_{t/n}k_t\to k_t$ as $n\to\infty$, we get 
that $k_t\in A$, and hence $K\subset A$. Let $b_n=a_{tn}(n)=x_tk_{tn}\in A_n$, $n\in\N$. 
Passing to a further subsequence of $\{b_n\}$ if necessary, 
we get that $b_n\to x_th$ for some $h\in K$, and hence, $x_t\in A$, as $K\subset A$. 
Now $\{x_t\}\subset A$. Hence, $A=\{x_t\}\times K$ and (2) holds. 
\end{proof}

The following lemma relates the set $\spp_{G/K}$ and a compact subset of $\spg$ which is invariant 
under the action of $\Aut(G)$.

\begin{lem} \label{subpgk} Let $G$ be a connected Lie group, $K$ be a compact connected central subgroup of 
$G$ and let $\pi:G\to G/K$ be the natural projection. Then the following hold:
\begin{enumerate}
\item $\{HK\mid H\in\Sub_G\}$ is a closed \rm{(}compact\rm{)} subset of $\Sub_G$ and $\{HK\mid H\in\spg\}$ is a closed 
\rm{(}compact\rm{)} subset of $\spg$, and they are $T$-invariant if $T(K)=K$, where $T\in\Aut(G)$. 
\item $\cP_1(G/K)=\{\pi(A)\mid A\in\cP_1(G)\}$. 
\item $\Sub^p_{G/K}=\{\pi(H)\mid H\in\spg\}$.
\item $\Sub^p_{G/K}$ is homeomorphic to $\{HK\mid H\in\spg\}$, and the latter is the same set as $\{H\in\spg\mid HK=KH=H\}$. 
\end{enumerate}
\end{lem}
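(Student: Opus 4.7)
For part (1), I would first identify the larger set $\{HK\mid H\in\Sub_G\}$ with $\{L\in\Sub_G\mid K\subseteq L\}$: any $HK$ contains $K$, and conversely $L\supseteq K$ forces $L=LK$. For each $k\in K$ the set $\{L\in\Sub_G\mid k\in L\}$ is the complement of the sub-basic open set $U_1(\{k\})$, hence closed, and intersecting over $k\in K$ shows $\{L\mid K\subseteq L\}$ is closed in $\Sub_G$. To descend to $\spg$ I would show $HK\in\spg$ for every $H\in\spg$: for $H\in\cP_1(G)$ this is exactly Lemma \ref{subpg}(2), since $K$ is central; and for a general $H=\lim H_n$ with $H_n\in\cP_1(G)$ one checks through Lemma \ref{conv} and compactness of $K$ that the map $H\mapsto HK$ is continuous, whence $H_nK\to HK$ and $HK\in\spg$. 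Thus $\{HK\mid H\in\spg\}=\spg\cap\{L\mid K\subseteq L\}$ is closed in $\spg$, and invariance under any $T\in\Aut(G)$ with $T(K)=K$ follows from $T(HK)=T(H)K$.

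For (2), if $A\in\cP_1(G)$ then $AK$ is closed (as $K$ is compact), so $\pi(A)=AK/K$ is closed in $G/K$ and lies in $\cP_1(G/K)$. Conversely, given $B=\{\exp tY\}\in\cP_1(G/K)$, pick any $X\in\gf$ with $\du\pi(X)=Y$. If $B$ is noncompact then $\{\exp tX\}$ is automatically closed, since otherwise its closure would be a torus $T'$ and $\pi(T')$ a compact subgroup containing $B\cong\R$, a contradiction. If $B$ is compact, let $\tau>0$ be minimal with $\exp(\tau X)\in K$; since $K$ is a torus, $\exp\colon\mathfrak{k}\to K$ is surjective, so pick $Z\in\mathfrak{k}$ with $\exp(\tau Z)=\exp(\tau X)$. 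Setting $X'=X-Z$, centrality of $\mathfrak{k}$ in $\gf$ gives $\exp(\tau X')=e$, so $\{\exp tX'\}$ is a closed one-parameter subgroup, and $\du\pi(X')=Y$ ensures $\pi(\{\exp tX'\})=B$. I expect this compact-lifting step to be the main technical obstacle: a generic lift of $Y$ need not integrate to a closed one-parameter subgroup, and the correction requires both the centrality of $K$ and its torus structure.

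For (3) and (4), consider the induced map $\pi_{*}\colon\Sub_G\to\Sub_{G/K}$, $H\mapsto\pi(H)$; its continuity follows by verifying conditions (I) and (II) of Lemma \ref{conv} for $\pi(H_n)$ given $H_n\to H$, lifting convergent sequences in $G/K$ back to $G$ using compactness of the fibres $K$. Combined with (2) this gives $\pi_{*}(\cP_1(G))=\cP_1(G/K)$, and since $\spg$ is compact $\pi_{*}(\spg)$ is closed in $\Sub_{G/K}$; the chain $\overline{\cP_1(G/K)}\subseteq\pi_{*}(\spg)\subseteq\overline{\pi_{*}(\cP_1(G))}=\Sub^p_{G/K}$ forces equality, establishing (3). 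For (4), the identification $\{HK\mid H\in\spg\}=\{H\in\spg\mid HK=KH=H\}$ is immediate since $K$ is central and $HK=H$ iff $K\subseteq H$. Restricted to this set, $\pi_{*}$ is surjective onto $\Sub^p_{G/K}$ by (3) and injective because any $L\supseteq K$ is recovered as $L=\pi^{-1}(\pi(L))$; being a continuous bijection between compact Hausdorff spaces (the domain is compact by (1)), it is a homeomorphism.
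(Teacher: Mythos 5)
Your proposal is correct, and in two places it takes a genuinely different route from the paper's proof. For the closedness claim in (1), the paper argues sequentially: if $H_nK\to L$, compactness of $\Sub_G$ and of $K$ force $L=HK$ for any limit point $H$ of $\{H_n\}$; your identification of $\{HK\mid H\in\Sub_G\}$ with $\{L\in\Sub_G\mid K\subseteq L\}$ and the observation that this is an intersection of complements of the sub-basic sets $U_1(\{k\})$ is a cleaner, purely topological argument (your continuity of $H\mapsto HK$, checked via Lemma \ref{conv} after passing to a convergent subsequence of the $K$-factors, is also what the paper implicitly uses to get $H_nK\to HK$ on $\spg$). The real divergence is the converse inclusion in (2): the paper works group-theoretically inside $B_1=\pi^{-1}(B)$, choosing a neighbourhood $U$ of $e$ whose elements lie on one-parameter subgroups and, when $B_1$ is compact, exploiting density of torsion elements to pick a finite-order $y\in U\setminus K$ whose (automatically closed) one-parameter subgroup surjects onto $B$; you instead work at the Lie algebra level, lifting $Y$ to $X\in\gf$ and, in the compact case, correcting by a central $Z\in\mathfrak{k}$ with $\exp(\tau Z)=\exp(\tau X)$ so that $\exp(\tau(X-Z))=e$, using surjectivity of $\exp$ on the torus $K$ and centrality of $\mathfrak{k}$. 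Your construction is more explicit and pinpoints exactly where the torus structure of $K$ enters, at the cost of a small untreated degenerate case ($B$ trivial, handled by $A=\{e\}$); the paper's avoids the Lie-algebra computation. Parts (3) and (4) follow the paper's argument in substance — surjectivity of the induced map on $\cP_1$, compactness of $\spg$, injectivity of $\pi$ on subgroups containing $K$, and the compact-to-Hausdorff continuous bijection — merely repackaged through the continuity of $\pi_*$.
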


\begin{proof} Let $K$ and $\pi$ be as above. Note that $K\in\spg$ by Lemma \ref{subpg}. If $G=K$, then it is easy to see that 
$(1-4)$ are trivially satisfied. 

Now suppose that $G\ne K$. Since $K$ is compact and central in $G$, $HK$ is a closed subgroup of $G$ for any 
closed subgroup $H$ of $G$. If $H_nK\to L$ in $\Sub_G$, then since both $\Sub_G$ and $K$ are compact, it is easy to show that 
for any limit point $H$ of $\{H_n\}$, $L=HK$. Therefore, $\{HK\mid H\in\Sub_G\}$ is closed. Now if $H\in\spg$, then there exists 
$\{A_n\}\subset\cP_1(G)$, such that $A_n\to H$. By Lemma \ref{subpg}\,(2), $A_nK\in\spg$. As $A_nK\to HK$ and $\spg$ is closed, 
we get that $HK\in\spg$. Now arguing as above, it is easy to show that $\{HK\mid H\in\spg\}$ is a closed subset of $\spg$. 
Moreover, if $T\in\Aut(G)$ with $T(K)=K$, then $T(HK)=T(H)K$ for any $H\in \Sub_G$. Therefore, the sets mentioned 
in (1) are invariant under the action of $\Aut(G)$ if $T(K)=K$. Thus (1) holds.

If $A\in\cP_1(G)$, then $AK$ is closed and $\pi(A)\in\cP_1(G/K)$. Conversely, suppose $B\in \cP_1(G/K$) is nontrivial. 
It is easy to see that $B_1:=\pi^{-1}(B)$ is a closed connected nilpotent Lie subgroup of $G$. There exists 
a neighbourhood $U$ of $e$ such that  every element of  $U$ is contained in a one-parameter subgroup and $U\mi K$ 
is nonempty. Choose any $x\in U\mi K$, there exists a one-parameter subgroup $\{x_t\}$ with $x_1=x$. Then 
$\pi(\{x_t\})=B$ and $B_1=\{x_t\}K$. As $K$ is central in $G$ we get that $B_1$ is, in fact, abelian. If $\{x_t\}$ is closed, 
then it belongs to $\cP_1(G)$ and $\pi(\{x_t\})=B$. Now suppose it is not closed, then its closure is compact, and hence 
$B_1$ is compact. This implies that the set of torsion elements is dense in $B_1$. Since $U\mi K$ is open, we can choose 
 $y\in B_1\cap(U\mi K)$ such that $y^n=e$, for some $n\in\N\mi\{1\}$. Let $\{y_t\}$ be a one-parameter subgroup in $B_1$ 
 such that $y_1=y$. Then $\pi(\{y_t\})=B$. Here, $\{y_t\}$ is closed as $y_1^n=e$. Therefore,  
 $\cP_1(G/K)=\{\pi(A)\mid A\in\cP_1(G)\}$, i.e.\ (2) holds. 

Let $\pi':\spg\to \spp_{G/K}$ be the map induced by $\pi$.  
Then $\pi'$ is a continuous closed map. From (2), we get that $\spp_{G/K}=
\ol{\cP_1(G/K)}=\pi'(\ol{\cP_1(G)})=\pi'(\spg)=\{\pi(H)\mid H\in\spg\}$. Thus (3) holds. 

Let $\cS(K):=\{HK\mid H\in\spg\}$. By (1), $\cS(K)$ is closed in $\spg$, and hence it is compact. 
Observe that $\cS(K)=\{H\in\spg\mid HK=KH=H\}$. By (3), $\pi'(\cS(K))=\pi'(\spg)=\spp_{G/K}$ and it is 
obvious that $\pi'|_{\cS(K)}$ is continuous and bijective, and hence it is a homeomorphism. Thus (4) holds. 
\end{proof}

\section{Some properties of automorphisms and their actions on \texorpdfstring{$G$}{G} and \texorpdfstring{${\mathrm{Sub}}^p_G$}{Sub p G}}

In this section, we discuss some properties and actions of automorphisms of a connected Lie group 
$G$ on $G$ as well as $\spg$ and discuss some preliminary results which are useful. 

For convenience, we introduce a class $\cp$ of Lie groups as follows:

\begin{defn}
A connected Lie group $G$ is said to be in class $\cp$ if $G$ has no compact central subgroup 
of positive dimension $($i.e.\ if $G$ has no non-trivial compact connected central subgroup$)$.
\end{defn} 

The notation is inspired by the fact that the class $\cp$ is a subset of the class $\mathcal{C}$ 
defined by Dani and McCrudden in \cite{DM}. The class $\mathcal{C}$ consists of connected Lie groups 
which admit a (real) linear representation with discrete kernel. Proposition 2.5 of \cite{DM} 
shows that $G$ is in class $\mathcal{C}$ if and only if for the radical $R$ of $G$, the 
intersection of $\ol{[R,R]}$ with the center of $G$ has no non-trivial compact subgroup. 
Therefore, the class $\cp$ is a subset of the class $\mathcal{C}$. Note that any connected Lie group 
$G$ admits a maximal compact connected central subgroup $M$, and $G/M$ has no 
non-trivial compact connected central subgroup and hence it belongs to class $\cp$. We may recall 
a result of Dani in \cite{D} that if $G$ belongs to class $\cp$, then $\Aut(G)$ is almost algebraic, i.e.\ 
it is a(n open) subgroup of finite index in an algebraic subgroup of $\GL(\gf)$, and in particular,  
it has finitely many connected components. 

It is known that $T\in\Aut(G)$ acts distally on $G$ if and only if $\du T$ acts distally on the Lie algebra $\gf$ of $G$, 
and the latter statement is equivalent to the statement that all the eigenvalues of $\du T$ have absolute value 1 
(cf.\ \cite{Ab2}). It is also known that if $G$ admits an automorphism $T$ such that $\du T$ has no 
eigenvalue of absolute value 1 (equivalently, if $T$ is expansive; see \cite{Sh2}, and also \cite{Bh}), then $G$ must 
be nilpotent (this is well-known, see e.g.\ \cite{Sh2}). We will need the following lemma which is slightly stronger. 
The lemma may be known, but we will give a short proof for the sake of completeness. 

\begin{lem} \label{eigen}
Let $G$ be a connected non-nilpotent Lie group and let $T$ be an automorphism of $G$. Then 
at least one eigenvalue of $\du T$ is a root of unity.
\end{lem}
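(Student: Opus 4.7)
The plan is to prove the contrapositive: if $\gf$ is not nilpotent, then some eigenvalue of $\du T$ is a root of unity. Concretely, I will exhibit a positive integer $k$ such that $1$ is an eigenvalue of $\du T^k$ on $\gf_\C$; the corresponding eigenvalue of $\du T$ is then a $k$-th root of unity. I split the argument into two cases depending on whether $\gf$ is solvable.

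\textbf{Non-solvable case.} The radical $\mathfrak{r}$ is characteristic, hence $T$-invariant, so $T$ induces an automorphism $\bar T$ on the nonzero semisimple Lie algebra $\ell := \gf/\mathfrak{r}$. Since $\Aut(\ell)/\Inn(\ell)$ is finite for semisimple $\ell$, some power $\bar T^k$ is inner, equal to $\Ad(g)$ for some $g$ in the adjoint group of $\ell$. Taking the Jordan decomposition $g = g_s g_u$, the kernel of $\Ad(g_s) - \Id$ contains a Cartan subalgebra of $\ell$ and is therefore positive-dimensional, so $1$ is an eigenvalue of $\Ad(g_s)$. Because $\Ad(g_u)$ is unipotent, this yields $1$ as an eigenvalue of $\Ad(g) = \bar T^k$ on $\ell$, and hence of $\du T^k$ on $\gf$.

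\textbf{Solvable non-nilpotent case.} Let $\mathfrak{n}$ be the nilradical, a proper $T$-invariant ideal. Applying Lie's theorem over $\C$ to the adjoint action, one obtains a finite collection of characters $\lambda_1, \ldots, \lambda_d \in \gf_\C^*$ giving the eigenvalues of $\ad X$ for each $X$. Each $\lambda_i$ vanishes on $\mathfrak{n}_\C$ (because $\ad X$ is nilpotent on $\gf$ for $X \in \mathfrak{n}$), while at least one $\lambda_i$ is nonzero (otherwise Engel's theorem would force $\gf$ to be nilpotent). Since $\ad(\du T \cdot X) = \du T \circ \ad X \circ \du T^{-1}$ is similar to $\ad X$, the transpose $(\du T)^*$ permutes the finite set $\{\lambda_i\}$; some power $(\du T^k)^*$ fixes every $\lambda_i$, and a nonzero $\lambda_i$ then provides a nonzero fixed vector, making $1$ an eigenvalue of $\du T^k$.

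\textbf{Main obstacle.} The trickiest step, I expect, is ensuring that in the non-solvable case an inner automorphism $\Ad(g)$ of a real semisimple Lie algebra necessarily has $1$ as an eigenvalue; this ultimately reduces to the fact that the centralizer of any element in a real semisimple Lie group contains a Cartan subalgebra. In the solvable case, one must be careful to verify that $\ad X$ is nilpotent \emph{on $\gf$} (not merely on $\mathfrak{n}$) whenever $X$ lies in the nilradical, so that the Lie-theoretic characters genuinely factor through the abelian quotient $\gf_\C / \mathfrak{n}_\C$ and Engel's criterion can be invoked to produce a nonzero $\lambda_i$.
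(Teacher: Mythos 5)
Your proof is correct, but it takes a genuinely different route from the one in the paper. The paper first quotients by the maximal central torus so that $\Aut(G)$ becomes almost algebraic (Dani \cite{D}), replaces $T$ by a power lying on a one-parameter subgroup $\{T_t\}$ of $\Aut(G)^0$, and then argues at the group level: for solvable $G$ it forms $H=\{T_t\}\ltimes G$ and uses that a connected solvable Lie group modulo its nilradical is abelian to conclude that $T$ acts trivially on $G/N$, while for non-solvable $G$ it passes to the semisimple quotient, writes $T_t=\inn(x_t)$ and exhibits the generator of $\{x_t\}$ as a fixed vector of $\du T$. You instead work entirely at the Lie algebra level: in the solvable case the Lie-theorem weights of the adjoint representation vanish on the nilradical, are not all zero by Engel, and are permuted by $(\du T)^*$ (this last step deserves the standard characteristic-polynomial/unique-factorization justification), so a power of $(\du T)^*$ fixes a nonzero functional; in the non-solvable case you combine finiteness of the outer automorphism group of a semisimple algebra with the multiplicative Jordan decomposition and the fact that the centralizer of a semisimple element of a reductive group contains a maximal torus. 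Your route avoids Dani's almost-algebraicity theorem and the embedding into a one-parameter group, so it is more elementary and self-contained; the paper's route has the advantage of producing the sharper byproduct recorded in Remark \ref{eigen-rem} (for solvable $G$ the automorphism induced on $G/N$ has finite order), which is reused later, e.g.\ in the proof of Theorem \ref{unip}. Two cosmetic points: what you prove is the statement itself, not its contrapositive; and the fact invoked in your closing paragraph fails for arbitrary elements (the centralizer of a regular unipotent element contains no Cartan subalgebra), but you only need it for the semisimple part $g_s$, where it does hold, and the passage from eigenvalue $1$ of $g_s$ to eigenvalue $1$ of $g=g_sg_u$ is valid because commuting semisimple and unipotent factors give $g$ and $g_s$ the same spectrum.
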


 \begin{proof}
Let $M$ be the largest compact connected central subgroup of $G$. 
 Then $G/M$ is in class $\cp$. As $G$ is not nilpotent, $G/M$ is also not nilpotent. 
 Since $M$ is characteristic in $G$, we may replace $G$ by $G/M$ and assume that $G\in\cp$. 
 If $T$ has finite order, then $(\du T)^n=\Id$ for some $n\in\N$, and the assertion follows in this case. 
 Now suppose $T$ does not have finite order. 
 
 Since $G\in\cp$,  $\Aut(G)$ is almost algebraic (cf.\ \cite{D}), and 
 hence it has finitely many connected components. Now we may replace $T$ by its suitable 
 power and assume that $T\in\Aut(G)^0$, the connected component of the identity 
 in $\Aut(G)$, and that $T$ is contained in a non-trivial one-parameter subgroup (say) $\{T_t\}$
 of $\Aut(G)^0$ as $T=T_1$. We will show that 1 is an eigenvalue of $\du T$. 
 
 Suppose $G$ is solvable. Let $H=\{T_t\}\ltimes G$.  Then $H$ is solvable 
 and the nilradical of $H$, being contained in $G$, is the same as the nilradical $N$ of $G$. 
 Therefore, $H/N$ is abelian. Thus $T$ acts trivially on $G/N$. As $G$ is not nilpotent, $G\ne N$, 
 and hence 1 is an eigenvalue $\du T$. 
 
 Now suppose $G$ is not solvable. Let $R$ be the radical of $G$. Then $R$ is characteristic in $G$, and let 
 $\ol{T}_t$ be the automorphisms of $G/R$ induced by $T_t$ for each $t$. It is enough to prove that 1 is an 
 eigenvalue of $\du \ol{T}$. Hence we may replace $G$ by $G/R$ and $\{T_t\}$ by $\{\ol{T}_t\}$ 
 and assume that $G$ is semisimple.  Now $\Aut(G)^0$ consists of inner automorphisms of $G$ and
 $T_t=\inn(x_t)$, $t\in\R$, for some one-parameter subgroup $\{x_t\}\subset G$. 
 If $x_1=e$, then $T=T_1=\Id$. Now suppose $x_1\ne e$. Then $\{x_t\}$ is non-trivial.
 Let $X\in \gf$ be such that $\exp tX=x_t$, $t\in \R$. As $T(x_t)=x_t$, $t\in\R$, 
 we have that $\du T(X)=X$ and 1 is an eigenvalue of $\du T$. 
 \end{proof}

\begin{rem} \label{eigen-rem} The proof above also shows that for a connected solvable Lie group $G$ with 
the nilradical $N$, for any $T\in \Aut(G)$, $\ol{T}$ has finite order, where $\ol{T}$ is the automorphism of 
$G/N$ induced by $T$. In particular, all the eigenvalues of $\du\ol{T}$ on the Lie algebra of $G/N$ 
are roots of unity. 
\end{rem}

For a connected Lie group $G$ and $T\in\Aut(G)$, let $C(T)$ denote the contraction group of $T$ 
and it is defined as 
$$
C(T):=\{x\in G\mid T^n(x)\to e\mbox{ as } n\to\infty\}.$$ 
Note that $C(T)$ is a connected nilpotent subgroup of $G,$ and if it is closed in $G$, then it is simply 
connected (see \cite{Si}, see also \cite{HSi}). Both $C(T)$ and $C(T^{-1})$ are trivial 
if and only if $T$ acts distally on $G$ (cf.\  \cite{RS2}, Theorem 4.1). Note also that $C(T)$ is 
trivial if and only if all the eigenvalues of $\du T$ have absolute value greater than or equal to 1 
(cf.\ \cite{RS2, Ab2}). 

We will need the following lemma which can be proven easily by using Theorem 1.1 of \cite{DS1}, 
 Theorem 2.4 and Corollary 2.7 of \cite{HSi}; we will give a sketch of proof. Note that the largest compact 
 connected central subgroup of $G$ is characteristic in $G$. The lemma generalises a part of the 
 statement in Proposition 4.3 of \cite{RS2}.
  
 \begin{lem} \label{ct-closed}
 Let $G$ be a connected Lie group and let $T\in\Aut(G)$. Then the following are equivalent:
 \begin{enumerate}
 \item $T$ acts distally on the largest compact connected central subgroup of $G$.
 \item The contraction groups $C(T)$ and $C(T^{-1})$ are closed, simply connected and nilpotent subgroups of $G$. 
 \item $T$ acts distally on every compact $T$-invariant subgroup of $G$. 
 \end{enumerate}
 \end{lem}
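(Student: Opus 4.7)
The plan is to prove the equivalences as a cycle $(3)\Rightarrow(1)\Rightarrow(2)\Rightarrow(3)$. The implication $(3)\Rightarrow(1)$ is immediate: since the largest compact connected central subgroup $M$ of $G$ is itself a compact $T$-invariant subgroup, the hypothesis of $(3)$ specialises directly to $(1)$.

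For $(1)\Rightarrow(2)$, I would invoke the cited structure theorems for contraction groups. Under the hypothesis that $T$ acts distally on $M$, Theorem~1.1 of \cite{DS1} together with Theorem~2.4 and Corollary~2.7 of \cite{HSi} imply that $C(T)$ is closed in $G$, simply connected, and nilpotent. The same conclusion then applies to $C(T^{-1})$, because $T^{-1}$ is also distal on $M$: the eigenvalues of $\du T^{-1}$ on $\mathrm{Lie}(M)$ are the reciprocals of those of $\du T$, and the unit circle is closed under reciprocation. Conceptually, $C(T)$ is always the analytic subgroup of $G$ whose Lie algebra is the sum of the generalised $\du T$-eigenspaces of $\gf$ with eigenvalues of absolute value strictly less than one, and this is always a nilpotent Lie subalgebra of $\gf$; the obstruction to the corresponding analytic subgroup being closed and simply connected is known to lie in a compact piece of the centre of $G$ and is controlled by the $T$-action on $M$.

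For $(2)\Rightarrow(3)$, I would give a direct argument. Let $K$ be a compact $T$-invariant subgroup of $G$ with Lie algebra $\mathfrak{k}:=\mathrm{Lie}(K^0)$. Since $K/K^0$ is finite, $T$ induces a finite-order permutation of $K/K^0$, which is automatically distal, and as $\mathrm{Lie}(K)=\mathfrak{k}$, it suffices by the eigenvalue characterisation of distality (cf.\ \cite{Ab1, Ab2}) to show that every generalised eigenvalue of $\du T|_{\mathfrak{k}}$ has absolute value one. Suppose for contradiction that some such $\lambda$ satisfies $|\lambda|<1$; the case $|\lambda|>1$ is symmetric on replacing $T$ by $T^{-1}$ and invoking $C(T^{-1})$. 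A standard eigenspace argument then produces a non-zero real $v\in\mathfrak{k}$ with $(\du T)^n v\to 0$ in $\gf$ as $n\to\infty$. The one-parameter subgroup $\{\exp(tv)\}_{t\in\R}$ lies in $K^0$, and for each fixed $t\in\R$, $T^n(\exp(tv))=\exp((\du T)^n(tv))\to e$ as $n\to\infty$, so $\{\exp(tv)\}_{t\in\R}\subset C(T)$. Consequently the closure of $\{\exp(tv)\}_{t\in\R}$ in $G$ is a non-trivial compact subgroup of $C(T)$. But $C(T)$, being closed, simply connected and nilpotent, is diffeomorphic to a Euclidean space and therefore contains no non-trivial compact subgroup, a contradiction.

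The main obstacle is the implication $(1)\Rightarrow(2)$, which requires carefully extracting the Hazod--Siebert and \cite{DS1} results (typically phrased in terms of the $T$-action on various compact central subgroups) and matching them to the single clean hypothesis that $T$ acts distally on the characteristic subgroup $M$; one must verify that the compact part of the centre controlling closedness of $C(T)$ is exactly $M$, rather than some finer or coarser variant. The remaining implications reduce, as above, either to the trivial observation about $M$ being a compact $T$-invariant subgroup or to the classical fact that closed simply connected nilpotent Lie groups contain no non-trivial compact subgroups.
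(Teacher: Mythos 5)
Your proof is correct and, for the implications $(3)\Rightarrow(1)$ and $(1)\Rightarrow(2)$, follows essentially the same route as the paper: the first is the same trivial observation, and the second leans on the same citations (Theorem 1.1 of \cite{DS1} and the Hazod--Siebert results), with the paper supplying the intermediate chain you correctly flag as the real work — namely passing to $G/M\in\cp$ to get $C(\ol{T})$ closed, pulling back to see that $C_M(T)=\{x\mid T^n(x)M\to M\}$ is closed, using distality on $M$ to get $C(T)\cap M=\{e\}$, and then applying Corollary 2.7 of \cite{HSi}. Where you genuinely diverge is $(2)\Rightarrow(3)$: the paper observes directly that $C(T|_K)=C(T)\cap K$ is a compact subgroup of the closed, simply connected nilpotent group $C(T)$, hence trivial, and then invokes Jaworski's criterion (trivial contraction groups of $T|_K$ and its inverse imply distality on the compact group $K$); you instead reduce to the Lie algebra of $K^0$ via Abels' eigenvalue characterisation and, from a contracted eigenvector $v\in\mathfrak{k}$, manufacture the compact subgroup $\ol{\{\exp tv\}}\subset C(T)$ to reach the same contradiction. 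Both arguments bottom out in the fact that a simply connected nilpotent group has no non-trivial compact subgroups; the paper's version is shorter and avoids the (correct but tersely justified) reduction from $K$ to $K^0$ and the component-permutation argument, while yours avoids citing Jaworski at the cost of handling the disconnectedness of $K$ by hand.
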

 
 \begin{proof} Let $M$ be the largest compact connected central subgroup of $G$. If $G=M$, then it is easy to see that 
 $(1-3)$ are equivalent. Now suppose $G\ne M$. Let $\pi:G\to G/M$ be 
 the natural projection and let $T'\in \Aut(G/M)$ be the automorphism induced by $T$ on $G/M$. 
 As $G/M\in\cp$, using Theorem 1.1 of \cite{DS1}, 
 one can show that $C(T')$ is closed (see the proof of Proposition 4.3 of \cite{RS2} for more details). Hence 
 $\pi^{-1}(C(T'))=C_M(T):=\{x\in G\mid T^n(x)M\to M\mbox{ in }G/M\}$ is closed. 
  Suppose (1) holds, i.e.\ $T$ acts distally on $M$. Then $C(T)\cap M=C(T|_M)=\{e\}$. 
  As $C_M(T)$ is closed, by Corollary 2.7 of \cite{HSi}, $C(T)$ is closed. Similarly, $C(T^{-1})$ is
 closed. The rest of the assertion in (2) follows from Corollary 2.4 in \cite{Si}.
 This proves $(1)\implies (2)$. 
 
 Now suppose (2) holds, i.e.\ $C(T)$ is closed. Let $K$ be any compact $T$-invariant subgroup of $G$. 
 Then $C(T|_K)=C(T)\cap K$ is a compact subgroup of $C(T)$. Hence $C(T|_K)$ is trivial 
 (cf.\ \cite{HSi}, Corollary 2.5). Similarly, $C((T|_K)^{-1})$ is trivial and $T$ acts 
 distally on $K$ (cf.\ \cite{Ja}). Thus (3) holds. $(3)\implies (1)$ is obvious. 
 \end{proof}

We note the following useful proposition which will enable us to work on a quotient Lie group which belongs to class $\cp$.

\begin{prop} \label{gk}
Let $G$ be a connected Lie group and let $T\in\Aut(G)$. Let $K$ be a $T$-invariant compact connected 
central subgroup of $G$ and let $\ol{T}$ denote the automorphism of $G/K$ induced by $T$. 
If $T$ acts distally on $\spg$, then $\ol{T}$ acts distally on $\spp_{G/K}$. 
\end{prop}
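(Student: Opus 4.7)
The plan is to exhibit $\spp_{G/K}$ as a continuous $T$-equivariant image of a closed $T$-invariant subspace of $\spg$, and then observe that distality is inherited through such a map. The required identification is essentially packaged in Lemma \ref{subpgk}, so most of the structural work has already been done.

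Concretely, let $\cS(K) := \{H \in \spg \mid HK = KH = H\}$. Since $T(K)=K$, Lemma \ref{subpgk}(1) shows that $\cS(K) = \{HK \mid H \in \spg\}$ is a closed (hence compact) $T$-invariant subset of $\spg$. Then Lemma \ref{subpgk}(4) supplies a homeomorphism
\[
\Phi: \cS(K) \to \spp_{G/K}, \qquad \Phi(H) = \pi(H).
\]
By the definition of the induced automorphism, $\pi\circ T = \ol{T}\circ \pi$, so $\Phi$ intertwines the two actions: $\Phi \circ T|_{\cS(K)} = \ol{T}\circ \Phi$.

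The remaining step is routine transfer of distality. Suppose towards a contradiction that $\ol{T}$ fails to act distally on $\spp_{G/K}$: there exist distinct $B_1, B_2 \in \spp_{G/K}$, a sequence $\{n_k\}\subset\Z$, and $L \in \spp_{G/K}$ with $\ol{T}^{n_k}(B_i)\to L$ for $i=1,2$. Put $H_i := \Phi^{-1}(B_i)\in \cS(K)\subset \spg$; bijectivity of $\Phi$ gives $H_1\ne H_2$. Continuity of $\Phi^{-1}$ together with equivariance yields $T^{n_k}(H_i) = \Phi^{-1}(\ol{T}^{n_k}(B_i)) \to \Phi^{-1}(L)$ for both $i$, contradicting distality of $T$ on $\spg$.

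Frankly there is no real obstacle here: the substance lives in Lemma \ref{subpgk}, and the only mildly subtle points are checking that $\Phi$ is $T$-equivariant (immediate from $\pi\circ T=\ol{T}\circ\pi$) and that $\Phi^{-1}$ carries convergence in $\spp_{G/K}$ back to convergence in $\cS(K)\subset\spg$ (immediate from $\Phi$ being a homeomorphism of compact Hausdorff spaces). The conceptual content is simply the recognition that $\{HK\mid H\in\spg\}$ sits inside $\spg$ as a $T$-invariant closed subspace naturally identified with $\spp_{G/K}$.
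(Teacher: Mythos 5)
Your proposal is correct and follows essentially the same route as the paper: both identify $\spp_{G/K}$ with the closed $T$-invariant subset $\cS(K)=\{H\in\spg\mid HK=H\}$ via the homeomorphism from Lemma \ref{subpgk}\,(4), note the equivariance $\pi\circ T=\ol{T}\circ\pi$, and transfer distality through this identification. No issues.
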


\begin{proof} Let $T\in\Aut(G)$ and let $\pi:G\to G/K$ be the natural projection. Here,
$\ol{T}(\pi(H))=\pi(T(H))$ for any $H\in\Sub_G$, and $\spp_{G/K}$ is $\ol{T}$ invariant. 
Let $\cS(K)=\{H\in \spg\mid HK=H\}$. It is a compact $T$-invariant subset of $\spg$ and it is 
homeomorphic to $\spp_{G/K}$ under the map induced by $\pi$ on $\cS(K)$ (cf.\ Lemma \ref{subpgk}\,(4)). 
Suppose $T$ acts distally on $\spg$. Then the $T$-action on $\cS(K)$ is distal. As $\pi\circ T=\ol{T}\circ\pi$, 
we get that $\ol{T}$ acts distally on $\spp_{G/K}$. 
\end{proof}
 
 We will also need the following elementary lemma. 
 
 \begin{lem} \label{scn}
Let $G$ be a simply connected nilpotent Lie group with the Lie algebra $\gf$ and let $T\in \Aut(G)$. 
Then the set $\cP_1(G)$ consisting of all closed one-parameter subgroups of $G$ is closed in $\Sub_G$ 
and is the same as $\spg$. Moreover, $T$ acts distally on $\spg$ if and only if $\du T$ acts distally on $\spp_\gf$. 
\end{lem}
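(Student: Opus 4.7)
The plan is to leverage the fact that for a simply connected nilpotent Lie group $G$, the exponential map $\exp:\gf\to G$ is a diffeomorphism and $G$ contains no non-trivial compact subgroups; in particular, every one-parameter subgroup $\{\exp(tX)\}_{t\in\R}$ is automatically closed, so $\cP_1(G)=\{\exp(\R X)\mid X\in\gf\}$ is in natural bijection with the set of one-dimensional vector subspaces of $\gf$ together with $\{0\}$, via $\Phi:\R X\mapsto\exp(\R X)$. Moreover, since $T\circ\exp=\exp\circ\,\du T$, the bijection $\Phi$ is equivariant: $T(\exp(\R X))=\exp(\R\,\du T(X))$.

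First I would show that $\cP_1(G)$ is closed in $\Sub_G$. Let $A_n=\exp(\R X_n)\in\cP_1(G)$ with $A_n\to A$ in $\Sub_G$. If $X_n=0$ for infinitely many $n$, then along that subsequence $A_n=\{e\}$, forcing $A=\{e\}\in\cP_1(G)$. Otherwise, fix a norm on $\gf$, normalise so that $\|X_n\|=1$, and pass to a subsequence with $X_n\to X$, a unit vector. Using Lemma~\ref{conv} I verify $A_n\to\exp(\R X)$: condition (II) is immediate from $\exp(tX_n)\to\exp(tX)$; for condition (I), if $h_{n_k}=\exp(t_{n_k}X_{n_k})\to g$, then since $\log=\exp^{-1}$ is continuous, $t_{n_k}X_{n_k}\to\log g$, and the normalisation $\|X_{n_k}\|=1$ forces $\{t_{n_k}\}$ to be bounded, so a further subsequence gives $t_{n_k}\to t$ with $tX=\log g$, hence $g\in\exp(\R X)$. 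Hausdorffness of $\Sub_G$ then yields $A=\exp(\R X)\in\cP_1(G)$. The identical argument applied to $\gf$, viewed as an abelian and hence simply connected nilpotent Lie group, shows that $\cP_1(\gf)$ is closed in $\Sub_\gf$, so $\spp_\gf=\cP_1(\gf)$ and $\spg=\cP_1(G)$.

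With both sides identified as their respective $\cP_1$, the same Chabauty convergence argument shows that $\Phi:\spp_\gf\to\spg$ is continuous; since its domain is compact and its codomain Hausdorff, the continuous bijection $\Phi$ is a homeomorphism. Combined with the equivariance $T\circ\Phi=\Phi\circ\du T$ noted above, this immediately gives that $T$ acts distally on $\spg$ if and only if $\du T$ acts distally on $\spp_\gf$. The main obstacle is the single Chabauty convergence claim $A_n\to\exp(\R X)$; the delicate point lies in condition (I) of Lemma~\ref{conv}, where one must rule out that a subsequence of parameters $t_{n_k}$ escapes to infinity. This is precisely where the normalisation $\|X_{n_k}\|=1$ together with the properness of $\exp$ is needed; once this is verified, both the closedness of $\cP_1(G)$ and the continuity of $\Phi$ follow from the same estimate.
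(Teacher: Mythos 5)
Your proposal is correct and follows essentially the same route as the paper: identify $\cP_1(G)$ with $\cP_1(\gf)$ via the equivariant homeomorphism induced by $\exp$, deduce closedness of $\cP_1(G)$ from that of $\cP_1(\gf)$, and transfer distality across the homeomorphism. The only difference is that you spell out why $\cP_1$ of a vector space is Chabauty-closed (normalising $\|X_n\|=1$ and bounding the parameters in condition (I)), a point the paper treats as known.
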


\begin{proof}
In a simply connected nilpotent Lie group, all one-parameter subgroups are closed 
and there is a one-to-one correspondence between non-trivial (closed) one-parameter subgroups of $G$
and one-dimensional subspaces of $\gf$. The correspondence is via the exponential map $\exp: \gf\to G$ with 
$\log$ as its inverse. Namely; for $\{x_t\}$, $x_t=\exp tX$, for some $X\in\gf$, and $\log x_t=tX$, $t\in\R$.
Also the exponential map induces a homeomorphism from $\cP_1(\gf)\to\cP_1(G)$. 
As $\exp\circ\,\du T=T\circ\exp$, it follows that $T$ acts distally on $\cP_1(G)=\spg$ if and
only if $\du T$ acts distally on $\cP_1(\gf)=\spp_\gf$. 
\end{proof}

\section{Proofs of the main results}

In this section, we prove the main results stated in the introduction. 
Additionally, we get some corollaries which are derived from the main results. We give some examples 
of Lie groups $G$ with a nontrivial central torus on which all automorphisms act distally, 
which is stronger than one of the conditions in Theorems \ref{distal-g}, \ref{general} and \ref{subgp-action}.

\begin{proof}[Proof of Theorem {\rm \ref{main}} $(1)$ and $(2)$ for nilpotent groups] Let $G$ 
be a (non-trivial) connected Lie group without any non-trivial compact connected central subgroup, 
i.e.\ $G$ belongs to class $\cp$. Let $T\in\Aut(G)$.

\smallskip
\noindent{$(1):$} Suppose $G$ is abelian. Then it is a vector group isomorphic to $\R^d$, 
for some $d\in\N$. Note that $\D=\{r\,\Id\mid r\in\R\mi\{0\}\}$, where $\Id$ is the identity matrix in $\GL(d,\R)$. 
If $d=1$, then $\GL(1,\R)$ is abelian, and the assertion follows trivially. Now suppose $d\ge 2$. Then $\spg$ 
is isomorphic to $\mbb {RP}^{d-1} \sqcup \{\{e\}\}$ and $T\in \GL(d,\R)$. 
Both $\mbb { RP}^{d-1}$ and $\{\{e\}\}$ are compact and $T$ invariant. 
Hence $T$ acts distally on $\spp_{\R^d}$ if and only if $T$ acts distally on $\mbb{RP}^{d-1}$. 
One can define an action of $T$ on $\mbb S^{d-1}$, the 
boundary of the unit sphere in $\R^d$, as $\ol{T}(x)=T(x)/\|T(x)\|$, where $\|\cdot\|$ denotes the 
usual norm on $\R^d$. Here, $\ol{T}$ is a homeomorphism and $\ol{T}(-x)= -\ol{T}(x)$. 
As $\RP^{d-1}$ is a quotient space of $\mbb S^{d-1}$, 
the action of $T$ on $\RP^{d-1}$ is the same as the action of $\ol{T}$ on 
${\mbb S}^{d-1}/\sim$, where $\sim$ is the equivalence relation which identifies  
$x$ with $-x$ for each $x\in \mbb S^{d-1}$. It is easy to see that $T$ acts distally 
on $\RP^{d-1}$ if and only if $\ol{T}$ acts distally on $\mbb S^{d-1}$. 
The latter statement is equivalent to the following: $T \in \K \D$, where $\K$ is a 
compact subgroup of $\GL(d,\R)$ and $\D$ is the center of $\GL(d,\R)$  
(this follows from Theorem 1 in \cite{SY1}, see also \cite{SY4}). 

Conversely, for $G=\R^d$ and $T\in \K\D$, with a compact subgroup $\K$ of $\GL(d,\R)$ and the center 
$\D$ of $\GL(d,\R)$, the action of $T$ on $\spg$ is same as that of $S$ on $\spg$ for some $S\in\K$ 
since $\D$ acts trivially on $\spg$. As the compact subgroups of $\Aut(G)$ act distally on any closed 
invariant subspace of $\Sub_G$ ($\spg$ in this case), we have that $T$ acts distally on $\spg$. 
This completes the proof of Theorem \ref{main}\,(1). 

\smallskip
\noindent{$(2):$} Suppose $G$ belongs to the class $\cp$ and it is non-abelian and nilpotent. Then
 $G$ is simply connected as $G$ has no compact central subgroup of positive dimension.
Suppose $T$ acts distally on $\spg$. We want to show 
that $T$ is contained in a compact subgroup of $\Aut(G)$. Since $G$ belongs to the class $\cp$, 
$\Aut(G)$ is almost algebraic as a subgroup of $\GL(\gf)$ (cf.\ \cite{D}). 

As $G$ is simply connected and nilpotent, by Lemma \ref{scn}, $\du T$ acts 
distally on $\spp_\gf$. Then from (1) above, we have that $\du T\in \K\{r\,\Id\mid r\ne 0\}$ 
for some compact subgroup $\K$ of $\GL(\gf)$ and $\Id$ is the identity map on $\gf$. 
Here, $\du T=\psi\,\circ\, r\,\Id$ for some $r\in \R\setminus\{0\}$ and $\psi\in \K$. If $r=\pm 1$, then 
$\du T^2\in \K$ and $T$ generates a relatively compact subgroup of $\Aut(G)$. If possible suppose 
$r\ne\pm 1$. Since $\Aut(G)$ is almost algebraic, there exists $n\in\N$ such that
$\du T^n$ is contained in a connected almost algebraic subgroup $\R\times \mbb T^k$ of 
$\Aut(G)^0$ (which is identified with its image in $\GL(\gf)$), where $\mbb T^k=(\mbb S^1)^k$ 
for some $k\in\N$. Therefore, $\psi^n\,\circ\, r^n\,\Id\in \R\times\mbb T^k\subset \Aut(G)^0$. 
Hence both $\psi^n$ and $r^n\,\Id\in \Aut(G)^0$. But $r^n\,\Id$ is not a Lie algebra automorphism of $\gf$ 
as $r^n\ne 1$; otherwise $r^n[X,Y]=r^n\Id[X,Y]=[r^n\Id(X),r^n\Id(Y)]=r^{2n}[X,Y]$, and it would imply that 
$[X,Y]=0$ for all $X,Y\in\gf$, which leads to a contradiction as $G$ is not abelian. Thus, $r=\pm 1$ and 
$T$ is contained in a compact subgroup of $\Aut(G)$. The converse statement is obvious. 
Thus (2) holds if $G$ is nilpotent.
\end{proof} 

We will continue with the proof of Theorem \ref{main} after proving Theorems \ref{distal-g} and \ref{unip}.  

\begin{proof}[Proof of Theorem {\rm\ref{distal-g}}]  Let $G$ be a connected Lie group which is not a vector group. 
Let $M$ be the largest compact connected central subgroup of $G$. Suppose 
$T\in\Aut(G)$ be such that it acts distally on $M$. Suppose $T$ acts distally on $\spg$. 
Then we want to show that $T$ acts distally on $G$. The assertion holds trivially if $G=M$. 

\smallskip
\noindent{\bf Step 1:} Suppose $G$ is simply connected and nilpotent. Since $G$ is not a vector group, 
$G$ is non-abelian and by the proof of Theorem \ref{main}\,(2) for the nilpotent case, $T$ is contained in 
a compact subgroup of $\Aut(G)$. Hence $T$ acts distally on $G$. 

\smallskip
\noindent{\bf Step 2:} Now suppose $G$ is not a simply connected nilpotent group.
 By Theorem 4.1 of \cite{RS2}, 
it is enough to prove that the contraction groups $C(T)$ and $C(T^{-1})$ are trivial. As $T$ acts distally on the 
maximal compact connected central subgroup $M$, by Lemma \ref{ct-closed}, both 
$C(T)$ and $C(T^{-1})$ are closed, simply connected and nilpotent subgroups. Suppose $C(T)$ is non-trivial. 
If possible, suppose $C(T)$ is non-abelian. As it is $T$-invariant and $T|_{C(T)}$ acts distally on $\spp_{C(T)}$, 
we get from Step 1 that $T|_{C(T)}$ acts distally on $C(T)$, a contradiction. Thus $C(T)$ is abelian and 
 $C(T)$ is isomorphic to $\R^d$ as it is simply connected. By Theorem \ref{main}\,(1),
 $T_1=T|_{C(T)}=\phi\circ \ap\,I$ for 
 some $\phi$ contained in a compact subgroup of $\GL(d,\R)$ and $\ap\in \R$, $0< |\ap|<1$; here $I$ denotes the 
 identity matrix in $\GL(d,\R)$, replacing $T$ by $T^2$, we may assume that $\ap>0$.

\smallskip
\noindent{\bf Step 3:} Suppose $G$ is nilpotent but not simply connected. Then $M$ as above is non-trivial. Since $T$ acts 
distally on $M$, by Lemma 2.5 of \cite{Ab2}, we have that $T^k|_M$ is unipotent for some $k\in\N$, i.e.\ $\du T$ has an 
eigenvalue which is a root of unity. If $G$ is not nilpotent, by Lemma \ref{eigen}, $\du T$  has an eigenvalue which is a 
root of unity. We may replace $T$ by $T^m$, for some $m\in\N$, and assume that 1 is an eigenvalue of $\du T$. 
Let $X\in \gf$, $X\ne 0$ be such that $\du T(X)=X$ and let $A=\{\exp tX\}$, a one-parameter 
subgroup, it is either closed or its closure $\bar A$ is compact. Since  $T|_A=\Id$, we have that 
$T|_{\bar A}=\Id$ and it follows that $\bar A$ normalises $C(T)$ and $\bar A\cap C(T)=\{e\}$. 

We now show that $C(T)$ normalises $\bar A$, which would imply that $\bar A\, C(T)$ is 
abelian as $C(T)$ and $A$ are so. Here, $T(\bar A)=\bar A$ and $\bar A\in \spg$. Now for any nonzero $c\in C(T)$, 
we have that 
 $$
 T^n(c\bar A c^{-1})=T^n(c)\,\bar A\,T^n(c^{-1})\to\bar A.$$
 Since $T$ acts distally on $\spg$, we get that $c\bar Ac^{-1}=\bar A$, for all $c\in C(T)$. 
 Thus $C(T)$ normalises $\bar A$, and hence $\bar A\, C(T)$ is abelian. 
 
 Let $H=\ol{A\times C(T)}$. Then $H$ is a closed $T$-invariant abelian subgroup and $\bar A\subset H$. First, suppose $H$ 
 is a vector group. By Theorem \ref{main}\,(1),  $T|_H\in \K\D$, for some compact subgroup $\K$ in $\GL(H)$ and 
 the center $\D$ of $\GL(H)$, and thus all the eigenvalues of $T|_H$ have the same absolute value. Since $T|_{\bar A}=\Id$ 
 and $T|_{C(T)}$ have eigenvalues of absolute value $\ap\ne 1$, it leads to a contradiction. Thus $C(T)$ must be trivial in
 this case. 
 
 \smallskip
 \noindent{\bf Step 4:}  Now suppose $H$ is not a vector group. Since $C(T)$ is closed and the image of $A$ 
 is dense in $H/C(T)$, we get that $H=K\times C(T)$ for the maximal compact subgroup $K$ of $H$. 
 As $T$ acts trivially on $H/C(T)$ and $K$ is $T$-invariant, we get that $T|_K=\Id$. 
 
 Let $\psi, S: H\to H$ be such that $S|_K=\Id$, $S|_{C(T)}=\ap\,\Id$, $\psi|_K=\Id$ and 
$\psi|_{C(T)}=\phi$, where $\alpha$ and $\phi$ are as in Step 2. Now $T|_{H}=S\psi=\psi S$, and $\psi$ is 
contained in a compact subgroup of $\Aut(H)$. Therefore, $S$ acts distally on $\spp_{H}$ (cf.\ \cite{SY2}, Lemma 2.2), 
and $C(S)=C(T)$. 

Let $\{z_t=x_tc_t\}$ be a closed one-parameter subgroup, where  $\{x_t\}\subset K$ is closed and isomorphic 
to $\Sc^1$ and $\{c_t\}\subset C(S)=C(T)$ is a non-trivial (closed) one-parameter subgroup such that 
$S(c_t)=c_{\ap t}$, $t\in\R$ (as addition is the group operation on $C(T)=\R^d$, we choose $c_t=tc$, $t\in\R$,  
for some nonzero  $c\in \R^d$). Let $\{n_k\}$ be an unbounded sequence in $\N$ such that 
$S^{n_k}(\{z_t\})\to C$ for some $C$ in $\spp_{H}$. Note that 
$\{z_t\}\subset \{x_t\}\times \{c_t\}$ and the latter is closed and $S$-invariant. 
So $C\subset \{x_t\}\times\{c_t\}$. For a fixed $t\in\R$, as $c_t\in C(S)=C(T)$, we get that
$$
S^{n_k}(z_t)=x_t S^{n_k}(c_t)\to x_t,$$ 
 as $n_k\to\infty$. Therefore, $\{x_t\}\subset C$. Again for a fixed $t\in\R$, 
$$S^{n_k}(z_{\ap^{-n_k}t})=
x_{\ap^{-n_k}t}S^{n_k}(c_{\ap^{-n_k}t})=x_{\ap^{-n_k}t}c_t\to x_sc_t$$ 
along a subsequence of $\{n_k\}$, for some $s\in\R$. 
Therefore, $x_sc_t$, and hence, $c_t$ belongs to $C$. Thus $\{c_t\}\subset C$ and  
$C=\{x_t\}\times \{c_t\}$, and hence $C$ is $S$-invariant. 
This implies that $S^{n_k}(S(\{z_t\}))\to C$, and hence that $S(\{z_t\})=\{z_t\}$ since $S$ acts distally on $\spg$. 
As $\{z_t\}$ is isomorphic to $\R$, there exists $\beta\in\R\mi\{0\}$ such that for all $t\in\R$, 
$$
S(z_t)=z_{\beta t}=x_{\beta t}c_{\beta t}\  \mbox{ and }\  
S(z_t)=S(x_tc_t)=x_tc_{\ap t}.$$ 
As $\{x_t\}\times \{c_t\}$ is a direct product and $\{c_t\}$ is isomorphic to $\R$, it follows that 
$\beta=\alpha$ and $x_t=x_{\ap t}$, i.e.\ $x_{(1-\ap)t}=e$ for all $t\in\R$. As $\ap\ne 1$, 
$x_t=e$ for each $t$, a contradiction. Therefore, $C(T)$ is trivial. 

Similarly, we can show that $C(T^{-1})$ is trivial, and hence $T$ acts distally on $G$. 
\end{proof}

In view of Theorem 1.1 of \cite{Ab2}, the following corollary is an easy consequence of Theorem \ref{distal-g}. 
We omit the proof.

\begin{cor} \label{distal-subgp} Let $G$ be a connected Lie group which is not a vector group and let $H$ be a 
subgroup of $\Aut(G)$ such that it acts distally on the largest compact connected central subgroup of $G$. 
If $H$ acts distally on $\spg$, then $H$ acts distally on $G$. 
\end{cor}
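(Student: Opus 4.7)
The plan is to reduce directly to Theorem \ref{distal-g} by means of Theorem 1.1 of \cite{Ab2}, which (as recalled in the introduction) states that a subgroup $\Hc\subset\Aut(G)$ acts distally on $G$ if and only if every element of $\Hc$ acts distally on $G$. So the target reduces to showing that each single $T\in H$ acts distally on $G$.

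To that end, I would fix an arbitrary $T\in H$ and verify the two hypotheses of Theorem \ref{distal-g} for $T$. First, since $H$ acts distally on $\spg$, the cyclic subgroup $\{T^n\mid n\in\Z\}\subset H$ also acts distally on $\spg$; this is immediate from the definition of distality, because restricting a group action to a subgroup only shrinks the relevant orbits and hence can only shrink their closures, which therefore still miss the diagonal. Equivalently, $T$ acts distally on $\spg$ as a homeomorphism. Second, the hypothesis that $H$ acts distally on the maximal compact connected central subgroup $M$ of $G$ gives, by the same observation, that $T$ acts distally on $M$.

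With both hypotheses of Theorem \ref{distal-g} verified for $T$, and using the assumption that $G$ is not a vector group, we conclude that $T$ acts distally on $G$. Since $T\in H$ was arbitrary, every element of $H$ acts distally on $G$, and therefore, by Theorem 1.1 of \cite{Ab2}, $H$ itself acts distally on $G$.

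There is essentially no obstacle; the only mild point worth flagging is the passage from distality of $H$ to distality of the subgroup $\langle T\rangle$, but this is a one-line consequence of the definition. All the real work has been done in Theorem \ref{distal-g}, and Abels' characterisation does the rest.
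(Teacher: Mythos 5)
Your proof is correct and is exactly the argument the paper intends: the authors state that the corollary ``is an easy consequence of Theorem \ref{distal-g}'' in view of Theorem 1.1 of \cite{Ab2} and omit the details, which are precisely your reduction to single elements via Abels' equivalence together with the observation that distality of the $H$-action passes to the cyclic subgroup generated by each $T\in H$, both on $\spg$ and on the maximal central torus.
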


Before proving Theorem \ref{unip}, we first prove the following proposition which is a special case of the theorem. 

\begin{prop} \label {crux}
Let $G$ be a connected Lie group, $T\in\Aut(G)$ and let $L$ be a closed connected nilpotent normal $T$-invariant 
subgroup of $G$. Suppose $T$ acts trivially on both $L$ and $G/L$. If $T$ acts distally on $\spg$, then $T=\Id$. 
\end{prop}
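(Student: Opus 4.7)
The plan is to assume $T\ne\Id$ and contradict the distality of $T$ on $\spg$ by producing two distinct subgroups $A_0,A_1\in\spg$ whose $T$-orbits share a common accumulation point. The construction is driven by the Lie algebra structure forced by the hypothesis.

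Translating to the Lie algebra, the assumptions $T|_L=\Id$ and triviality of the induced map on $G/L$ give $\du T=\Id+N$, where $N:\gf\to\lf\subset\gf$ is linear with $N|_\lf=0$; hence $N^2=0$ and $\du T^n=\Id+nN$ for every $n\in\Z$. If $T\ne\Id$ then $N\ne 0$, so choose $X\in\gf$ with $Y:=N(X)\in\lf\setminus\{0\}$; then $X\notin\lf$, and $X,Y$ are linearly independent. For $s\in\R$ let $A_s$ be the closure in $G$ of the one-parameter subgroup $\{\exp t(X+sY)\mid t\in\R\}$; by Lemma \ref{subpg}, $A_s\in\spg$. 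The identity $\du T^n(X+sY)=X+(s+n)Y$ gives $T^n(A_s)=A_{s+n}$, and a direct comparison of Lie algebras (after, if necessary, replacing $X$ by another preimage of $Y$ under $N$ so that the image of $\{\exp tX\}$ in $G/L$ has non-compact closure) shows $A_s\ne A_{s'}$ whenever $s\ne s'$.

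The crux is to prove that along any subsequence $n_k\to\infty$ for which both $A_{s+n_k}$ and $A_{s'+n_k}$ converge in $\spg$ (which exists by compactness), the two limits coincide. An element $h_n\in A_{s+n}$ can be approximated by $\exp V_n$ with $V_n=t_nX+t_n(s+n)Y$; setting $t_n'=t_n(s+n)/(s'+n)$ produces $h_n'=\exp V_n'\in A_{s'+n}$ with $V_n'-V_n=\tfrac{(s-s')t_n}{s'+n}\,X$, which tends to $0$ in $\gf$ provided $t_n$ can be chosen bounded (this can be arranged via the period structure of any compact direction of the one-parameter subgroup). The Baker--Campbell--Hausdorff formula, combined with the nilpotency of $\lf$ and the fact that $[X,Y]\in\lf$ (as $\lf$ is an ideal, so iterated brackets with $V_n$ fall into $\lf$ and terminate), then yields $\exp(V_n)^{-1}\exp(V_n')\to e$; consequently $h_n'$ converges to the same limit as $h_n$, and by Lemma \ref{conv} the accumulation sets of $\{A_{s+n}\}$ and $\{A_{s'+n}\}$ agree. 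Taking $s=0,\,s'=1$ and extracting a subsequence along which both $T^{n_k}(A_0)$ and $T^{n_k}(A_1)$ converge, distality forces $A_0=A_1$, a contradiction.

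The main obstacle is making the BCH estimate rigorous when $V_n$ is unbounded: although nilpotency of $\lf$ ensures the BCH series terminates, individual iterated bracket terms can grow with $n$ and one must verify they contribute elements of $G$ tending to the identity after exponentiation. A convenient reduction is to first pass to the quotient of $G$ by the maximal compact connected subgroup of $L$ that is central in $G$ (using Proposition \ref{gk} and Lemma \ref{subpgk}), reducing to the case where $L$ is simply connected and $\exp_L$ is a diffeomorphism, so that the required cancellations are visible on the nose. Any remaining compact factors, coming from $\{\exp tX\}$ having compact image in $G/L$, are then handled by working with full torus closures via Lemma \ref{subpg}\,(2); this case analysis based on whether the ambient one-parameter subgroups in $G$ and $G/L$ are closed $\R$'s, compact $\Sc^1$'s, or dense in higher-dimensional tori is the most delicate part of the argument.
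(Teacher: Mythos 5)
Your overall strategy is essentially the paper's, just transplanted to the Lie algebra: the identity $\du T^n=\Id+nN$ with $N(\gf)\subset\lf$, $N|_\lf=0$ is exactly what makes the orbit of a one-parameter subgroup drift linearly in the $L$-direction, and the paper exploits the same phenomenon in the group-level form $T^n(x_t)=x_ty_t^n$ with $y_t=x_t^{-1}T(x_t)\in L$. However, two steps of your plan do not go through as written. First, the central estimate ``$V_n'-V_n\to0$ hence $\exp(V_n)^{-1}\exp(V_n')\to e$'' is not valid for unbounded $V_n$: with $V_n'=V_n+\epsilon_nX$, the first BCH correction is $-\tfrac12[V_n,\epsilon_nX]=-\tfrac12\epsilon_n t_n(s+n)[Y,X]$, and since $\epsilon_n\sim t_n(s-s')/n$ while the $Y$-coefficient of $V_n$ is $\sim t_nn$, this term tends to $-\tfrac12 t_0^2(s-s')[Y,X]$, which is nonzero whenever $[X,Y]\ne0$ and $t_n\to t_0\ne0$ (e.g.\ $G$ the Heisenberg group, $L=\exp(\R Q\oplus\R Z)$, $N(P)=Q$, so $[X,Y]=[P,Q]=Z$). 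You flag this as ``the main obstacle'' but do not resolve it; ruling out the regime $t_n\not\to0$ is precisely the case analysis (closed $\R$ versus compact directions, $L$ simply connected or not) that the proposal defers, so the crux of the argument is missing. Second, the claim $A_s\ne A_{s'}$ is not justified: if $\{\exp t(X+sY)\}$ is not closed, the closures $A_0$ and $A_1$ may both equal a torus containing $\exp\R X$ and $\exp\R Y$, and your proposed fix --- replace $X$ by a preimage of $Y$ under $N$ whose image in $G/L$ has non-compact closure --- is unavailable when, say, $G/L$ is compact.

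The paper's proof avoids both problems. It first arranges, inside $H=\ol{\{x_t\}L}$, a one-parameter subgroup $A=\{x_t\}$ that is genuinely closed (isomorphic to $\R$, or to $\Sc^1$ through a finite-order element not fixed by $T$), so the endgame is: the limit $B$ of $T^{n_k}(A)$ is shown to be pointwise fixed by $T$, whence $T(A)=A$ by distality, and unipotence of $T$ acting on a closed one-dimensional subgroup forces $T(x_t)=x_t$, contradicting $T(x_1)\ne x_1$ --- no injectivity of $s\mapsto A_s$ is needed. And instead of BCH in $G$, it only uses that $\exp$ is a diffeomorphism on the simply connected nilpotent $L$: from $y_{t_k}^{n_k}=\exp(n_kY_k)\to y'$ and $n_k\to\infty$ one gets $Y_k\to0$, hence $T(b)=T(x_{t_0})y'=x_{t_0}y'=b$ for every $b\in B$; the non--simply connected case is then reduced to this one by quotienting by the maximal compact (central) subgroup of $L$ and handled separately with $y_t$ central. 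I would recommend reworking your argument along these lines; as it stands the two gaps above are genuine.
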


\begin{proof} Since $T$ acts trivially on $G/L$ and $L$, we have that $T$ acts distally on $G$. In fact, all the eigenvalues 
of $\du T$ are equal to 1 and $T$ is unipotent. Suppose $T$ acts distally on $\spg$. Let $U$ be a neighbourhood 
of the identity $e$ such that every element of $U$ is contained in a one-parameter subgroup. Since $G$ is 
connected, $U$ generates $G$ and it is enough to show that $T(x)=x$ for all $x\in U$. 
 
\smallskip
 \noindent{\bf Step 1:} If possible, suppose $T(x)\ne x$ for some $x\in U$. There exists a one-parameter 
 subgroup $\{x_t\}$ in $G$ such that $x_1=x$. Let $H=\ol{\{x_t\}L}$. Then $H$ is a closed connected solvable $T$-invariant 
 subgroup of $G$ and $H/L$ is abelian. Let $\pi:G\to G/L$ be the natural projection.  
 Then $\pi(H)=\ol{\pi(\{x_t\})}$. If $\pi(\{x_t\})$ is unbounded, it is closed and it is isomorphic to $\R$ and in this case 
 $\{x_t\}$ is also unbounded and closed. Moreover $H=\{x_t\}L$.
 
Now suppose $\pi(\{x_t\})$ is relatively compact. As $H$ is solvable, there exists a maximal compact 
(connected, abelian) subgroup $K$ in $H$ such that $H=KL$. As $T(x)\ne x$ and $T|_L=\Id$, 
we have that $T(h)\ne h$ for some $h\in K$, and hence $T(h)\ne h$ for some finite order element $h$ in $K\cap U$.
There exists a closed (compact) one-parameter subgroup $\{h_t\}$ in $K$ such that $h_1=h$. Replacing 
$\{x_t\}$ by $\{h_t\}$, we have in this case too that $H$ is closed and $H=\{x_t\}L$.

Now we have that $H=\{x_t\}L$ is closed, where either $\pi(\{x_t\})$ (as well as $\{x_t\}$) is isomorphic to $\R$ 
or $\{x_t\}$ is isomorphic to $\Sc^1$ and $T(x_1)\ne x_1$. As $T$ acts trivially on $G/L$, we have that $T(H)=H$ 
and $T(x_t)=x_ty_t$, for some $y_t\in L$, $t\in\R$, where $y_1\ne e$. 

 \smallskip
 \noindent{\bf Step 2:} Let $A:=\{x_t\}$ and let $\{n_k\}\subset\N$ be an unbounded sequence 
such that $T^{n_k}(A)\to B$ for some $B\in\spp_H$. By Lemma \ref{discrete}, $B$ is not discrete as $A$ 
is connected and non-trivial. Let $b\in B$ be such that $b\ne e$. There exist $t_k\in\R$, $k\in\N$, 
such that 
$$
T^{n_k}(x_{t_k})=x_{t_k}y_{t_k}^{n_k}\to b.$$ 
 As $y_t\in L$, $t\in\R$, we get that $\pi(x_{t_k})\to\pi(b)$. If $H/L$ is isomorphic to $\R$, 
then we get that the sequence $\{t_k\}$ is bounded. In the second case where $\{x_t\}$ is compact, we 
can choose $\{t_k\}$ to be bounded. Passing to a subsequence if necessary, we may assume that 
$t_k\to t_0$. Therefore, $x_{t_k}\to x_{t_0}$ and $y_{t_k}\to y_{t_0}$ and 
$b=x_{t_0}y'$, where $y_{t_k}^{n_k}\to y'$ in $L$. 

 \smallskip
 \noindent{\bf Step 3:} We first assume that $L$ is simply connected. We show that $y_{t_0}=e$ in this case.
Let $\lf$ be the Lie algebra of $L$. Since $L$ is simply connected and nilpotent, $\exp: \lf\to L$ is a 
homeomorphism with $\log$ as its inverse. Let $y_{t_k}=\exp Y_k$ for some $Y_k\in\lf$. 
Then $y_{t_k}^{n_k}=\exp(n_kY_k)$ and as $y_{t_k}^{n_k}\to y'$, we have 
that $n_kY_k\to \log y'$ in $\lf$. As $\lf$ is isomorphic to $\R^d$ for some $d\in\N$, it follows that $Y_k\to 0$ in $\lf$, 
and hence $y_{t_k}\to e$, i.e.\ $y_{t_0}=e$. Now
$$
T(b)=T(x_{t_0})y'=x_{t_0}y_{t_0}y'=x_{t_0}y'=b.$$
Thus $T(B)=B$. As $T$ acts distally on $\spg$, we have that $T(A)=A$. Since $A$ is a one-parameter subgroup in $G$ 
and $T$ is unipotent, we get that $T(x_t)=x_t$, $t\in\R$. This leads to a contradiction as $T(x_1)\ne x_1$. 
Therefore, $T(x)=x$ for all $x\in U$, and hence, for all $x\in G$. Thus $T=\Id$ if $L$ is simply connected. 
 
 \smallskip
 \noindent{\bf Step 4:} Suppose that $L$ is not simply connected. Let $M$ be the maximal 
 compact (normal) subgroup of $L$. Then $M$ is central in $G$ and $L/M$ is simply connected. 
 Let $\varrho:G\to G/M$ be the natural projection and let $\ol{T}$ be the automorphism on $G/M$ 
 induced by $T$. Then $\ol{T}$ acts trivially on both $\varrho(L)$ and $\varrho(G)/\varrho(L)$. 
 By Proposition \ref{gk}, $\ol{T}$ acts distally on $\spp_{G/M}$. From Step 3, we get that $\ol{T}$ acts trivially on 
 $G/M$, i.e.\ $T(g)\in gM$, $g\in G$. In particular, for $\{x_t\}$ as above, $T(x_t)=x_ty_t$, where $y_t\in M$, 
 for all $t\in\R$, and $y_1\ne e$. Since $M$ is central in $G$, $y_tx_s=x_sy_t$, $t, s\in \R$, and $\{y_t\}$ is a 
 one-parameter subgroup of the compact central subgroup $M$. Note that $\{y_t\}$ is closed if $\{x_t\}$ is compact. 

As $\spg$ is compact, there exists an unbounded sequence $\{n_k\}$ in $\N$ such that $T^{n_k}(\{x_t\})\to C$ 
for some $C\in\spg$. We show that $C=\{x_t\}\times \ol{\{y_t\}}$. As $\{x_t\}\subset \{x_t\}\times \ol{\{y_t\}}$, 
and the latter is $T$-invariant, we get that $C\subset \{x_t\}\times \ol{\{y_t\}}$. For a fixed $t\in\R$, 
$$
T^{n_k}(x_{t/n_k})=x_{t/n_k}y_{t/n_k}^{n_k}=x_{t/n_k}y_t\to y_t$$ 
as $n_k\to\infty$. Therefore, $y_t\in C$ for each $t$, and $\ol{\{y_t\}}\subset C$ as $C$ is closed. Also, 
$T^{n_k}(x_t)=x_ty_t^{n_k}\to x_ty$ along a subsequence of $\{n_k\}$ for some $y\in\ol{\{y_t\}}\subset C$.
Hence, $x_t\in C$, $t\in\R$. Thus $\{x_t\}\times \ol{\{y_t\}}\subset C$. 
Now $T(C)=C$, and we get that $T^{n_k}(T(\{x_t\}))\to C$. As $T$ acts distally on $\spg$, we have that 
$T(\{x_t\})=\{x_t\}$, and since $T$ is unipotent, $T(x_t)=x_t$ for each $t$. 
But $T(x_1)=x_1y_1\ne x_1$.  
This leads to a contradiction. Therefore, $T(x)=x$ for each $x\in U$, and hence, for each $x\in G$. 
Thus $T=\Id$.
 \end{proof}

\begin{proof}[Proof of Theorem \rm{\ref{unip}}]
Let $G$ be a connected Lie group and let $T\in\Aut(G)$. Suppose $T$ is unipotent and it acts distally on $\spg$. 
We show that $T=\Id$. 

\smallskip
\noindent{\bf Step 1:} Suppose $G$ is nilpotent. If possible, suppose $T\ne\Id$. Since $T$ is unipotent, 
by Proposition 3.10 of \cite{SY3}, there exists an increasing sequence of closed connected normal 
T-invariant subgroups $\{e\} = G_0 \subset G_1 \subset \cdots\subset G_n = G$, $n \geq 2$, such that 
$T$ acts trivially on each successive quotient group $G_i/G_{i-1}$, $1\leq i\leq n$, and $T$ does not act 
trivially on $G_{i+1}/G_{i-1}$, $1\leq  i \leq n -1$. Then $G_2\ne G_1$. As $T$ acts distally on $\spg$, 
it acts distally on $\spp_{G_2}$. Now by Proposition \ref{crux}, $T|_{G_2}=\Id$, 
 which leads to a contradiction. Therefore, $T=\Id$ when $G$ is nilpotent. 

\smallskip
\noindent{\bf Step 2:} Suppose $G$ is not nilpotent. Let $N$ be the nilradical of $G$. As $T$ acts distally on $\spp_N$, 
it follows from Step 1 that $T|_N=\Id$. Suppose $G$ is solvable.  As noted in Remark \ref{eigen-rem}, we have that the 
automorphism induced by $T$ on $G/N$ is a finite order automorphism, and it is trivial since it is unipotent. 
Now by Proposition \ref{crux}, $T=\Id$. 

\smallskip
\noindent{\bf Step 3:} Suppose $G$ is not solvable. Let $R$ be the radical of $G$. Then as $T$ acts 
distally on $\spp_R$, we have from Step 2 that $T|_R=\Id$. 

Now $G/N$ is reductive. Let $\pi:G\to G/N$ be the natural projection. Then  $\pi(G)=SR'$, 
an almost direct product of a 
semisimple Levi subgroup $S$ of $\pi(G)$ and the radical $R'=\pi(R)$ of $\pi(G)$, where $R'$ is central in $\pi(G)$. 
Let $\T$ be the automorphism of $\pi(G)$ induced by $T$. Then $\T$ acts trivially on $R'$. Also
$\T$ keeps $S$ invariant as it is the commutator subgroup of $\pi(G)$. Now if $\T$ acts trivially on
$\pi(G)=G/N$, then by Proposition \ref{crux}, $T=\Id$. If possible suppose $\T$ does not act trivially on $\pi(G)$.
As $\T$ is unipotent, there exists a nontrivial unipotent element $u$ in $S$ such that $\T=\inn(u)$. Considering the Iwasawa 
decomposition $S=KAU$, where $u\in U$, we have that $u$ is not in the center of $AU$ (see Lemma 6.50 of \cite{Kn}). 
Now $AUR'$ is connected, solvable and $\T$ invariant. 
Let $B$ be the closure of $\pi^{-1}(AUR')$. It is a closed connected solvable $T$-invariant subgroup of $G$. 
As $T$ acts distally on $\spp_B$, by Step 2, $T|_B=\Id$. This implies that $\T$ 
acts trivially on $AU$ and that $u$ centralises $AU$, a contradiction. Thus $\T$ acts trivially on $\pi(G)=G/N$, 
and as noted above, by Proposition \ref{crux}, we get that $T=\Id$. 

The converse statement in the theorem is obvious.
\end{proof}

\begin{proof}[Proof of Theorem \ref{main}\,$(2)$ for non-nilpotent case]
Let $G$ be a connected Lie group in class $\cp$. Then $\Aut(G)$ is almost algebraic as a subgroup 
of $\GL(\gf)$ (via the correspondence $T\leftrightarrow \du T$) (cf.\ \cite{D}). Suppose $T\in\Aut(G)$ acts 
distally on $\spg$. Suppose $G$ is not nilpotent. In particular, it is not a vector group.  
By Theorem \ref{distal-g}, $T$ acts distally on $G$. Then for some $n\in\N$, $T^n$ is contained in a connected 
abelian almost algebraic subgroup of $\Aut(G)$ and it has the form $\K\times\,\U$, where $\K,\,\U\subset\Aut(G)$, 
$\K$ is an abelian compact connected group and $\U$ is an abelian unipotent connected group; i.e.\ it consists of 
unipotent elements (cf.\ \cite{Ab1}, Corollaries 2.3 and 2.5). Now $T^n=S\circ\psi=\psi\circ S$, where $S\in\U$ is unipotent 
and $\psi\in\K$. Hence, $S$ acts distally on $\spg$ (cf.\ \cite{SY2}, Lemma 2.2). By Theorem \ref{unip}, $S=\Id$. 
Thus $T^n=\psi\in\K$, and hence $T$ is contained in a compact subgroup of $\Aut(G)$. 
The converse is obvious (cf.\ \cite{PaS}, Lemma 4.2).
\end{proof}

Now we want to prove Theorem \ref{general}.  We need an elementary result about certain automorphisms keeping 
a maximal compact connected abelian subgroup (maximal torus) invariant.  In a connected Lie group $G$, all maximal tori 
are conjugate to each other. Any compact abelian subgroup of $G$ is contained in a maximal torus, as    
any maximal compact abelian subgroup of $G$ is connected. If $A$ is a maximal torus in $G$, then so is $T(A)$, 
for any $T\in\Aut(G)$, and $T(A)$ is conjugate to $A$. Any maximal compact subgroup of a connected solvable Lie 
group is abelian (cf.\ \cite{I}).
 
 In \cite{DS2}, any subgroup of $\Aut(G)$ which fixes elements of a maximal torus is shown to be almost algebraic. 
 Given any automorphism $T$ of $G$, it need not keep any torus invariant. However, this is the case if $T$ is contained 
 in a compact connected abelian subgroup of $\Aut(G)$, as shown in the following elementary lemma. The lemma also 
 holds if we replace the radical $R$ by any closed connected characteristic subgroup of $G$. 

\begin{lem} \label{aut-inv} Let $G$ be a connected Lie group. If $\K$ is a compact connected abelian subgroup 
of $\Aut(G)$, then there exists a maximal torus of $G$ \rm{(}resp.\ of the radical of $G$\rm{)}  on which every 
automorphism in $\K$ acts trivially. 
\end{lem}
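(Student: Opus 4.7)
The plan rests on two clean observations. First, for any torus $T$, the automorphism group $\Aut(T)$ is discrete (isomorphic to $\GL(r,\Z)$ for an $r$-torus), so any continuous homomorphism from the connected group $\K$ into $\Aut(T)$ is trivial; hence it suffices to produce a maximal torus of $G$ (resp.\ of $R$) that is merely set-theoretically preserved by every element of $\K$. Second, inside the semidirect product $H := \K\ltimes G$, the group-theoretic commutator $[k,g]=e$ for $k\in\K$ and $g\in G$ is equivalent to $k$ fixing $g$ under the given $\Aut(G)$-action; so it is enough to locate a maximal torus of $G$ sitting inside an abelian subgroup of $H$ that also contains $\K$.

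Form $H := \K\ltimes G$, a connected Lie group, and pick a maximal torus $T_H$ of $H$—that is, a maximal compact connected abelian subgroup—containing $\K$; such $T_H$ exists because $\K$ is itself compact connected abelian. The candidate maximal torus of $G$ is $T_G := (T_H \cap G)^0$, which is closed abelian and connected, hence a torus. Since $\K$ and $T_G$ both lie in the abelian group $T_H$ they commute in $H$, and by the second observation this means $\K$ acts trivially on $T_G$. It remains to verify that $T_G$ is actually maximal in $G$, which comes down to a dimension/rank calculation: restricting the projection $H\to \K$ (with kernel $G$) to $T_H$ gives a surjection with kernel $T_H\cap G$, whence $\dim T_H = \dim T_G + \dim \K$. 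To see $\dim T_H = \operatorname{rank}(G) + \dim\K$, pick a maximal compact subgroup $L$ of $H$ containing $\K$; since $H$ is diffeomorphic to $\K\times G$ as a smooth manifold and $G$ deformation retracts onto any maximal compact $K_G$, one obtains $\dim L = \dim\K + \dim K_G$, whence $(L\cap G)^0$ is a maximal compact of $G$ and $L$ sits in a short exact sequence $1 \to (L\cap G)^0 \to L \to L/(L\cap G)^0 \to 1$ whose quotient is a torus of dimension $\dim\K$; rank additivity for this sequence of compact connected Lie groups then gives $\operatorname{rank}(H) = \operatorname{rank}(L) = \operatorname{rank}(G) + \dim\K$, forcing $\dim T_G = \operatorname{rank}(G)$.

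For the radical statement, the same scheme applied to $H_R := \K\ltimes R$ simplifies dramatically: since $R$ (and hence $H_R$) is solvable, in any connected solvable Lie group every maximal compact subgroup is already a torus that coincides with a maximal torus, so the rank-additivity step is vacuous and the maximal torus $T_{H_R}$ of $H_R$ containing $\K$ is itself a maximal compact, whose intersection with $R$ contains a maximal torus of $R$ by a trivial dimension count. The main obstacle in the general case is accordingly the rank-additivity input for the compact Lie group $L$; this is where classical compact Lie theory really enters, and one can equivalently appeal to Borel's theorem that the fixed-point subgroup of a torus action on a compact connected Lie group has the same rank as the group itself, applied to $\K$ acting on the maximal compact of $G$ that $\K$ preserves (the preservation itself being a consequence of the $(L\cap G)^0$ identification above).
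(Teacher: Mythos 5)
Your proof is correct and follows essentially the same route as the paper: form $H=\K\ltimes G$, take a maximal torus $T_H$ of $H$ containing $\K$, and note that commutativity inside the abelian group $T_H$ forces every automorphism in $\K$ to fix $T_H\cap G$ pointwise. The only real difference is that the paper disposes of the maximality of $T_H\cap G$ in $G$ (resp.\ in the radical $R$) by citing the standard fact that a maximal torus of a connected Lie group meets any closed connected normal subgroup in a maximal torus of that subgroup, whereas you rederive it through the homotopy-type and rank-additivity computation --- which works, but the rank-additivity input you invoke for the compact extension is precisely that same standard fact.
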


\begin{proof} 
Let $A$ be any maximal torus in a connected Lie group $G$. It is easy to see that for any closed connected normal 
subgroup $L$ of $G$, $A\cap L$ is a maximal torus of $L$. 

Let $\K$ be a compact connected abelian subgroup of $\Aut(G)$ and let $H=\K\ltimes G$; where the action of $\K$ 
on $G$ is by automorphisms. Then $H$ is a connected Lie group and $G$ is normal in $H$. Let $R$ be the radical 
of $G$. Then $R$ is characteristic in $G$ and hence normal in $H$. Let $A$ be a maximal torus in $H$ containing 
$\K$. Arguing as above, we get that $A\cap G$ (resp.\  $A\cap R$) is a maximal torus in $G$ (resp.\ $R$). As 
$\K\subset A$ and $A$ is abelian, we get that every automorphism in $\K$ acts trivially on $A\cap G$ (resp.\ $A\cap R$). 
\end{proof}

\begin{proof}[Proof of Theorem \ref{general}] Let $G$ be a connected Lie group and let $T\in\Aut(G)$. 
The statements $(5)\implies (4)\implies (3)\implies (2)\implies (1)$ are obvious. 
Now it is enough to show that $(1)\implies (5)$ when $G$ is not a vector group and 
$T$ acts distally on the largest compact connected central subgroup (say) $M$ of $G$.
Suppose (1) holds, i.e.\ $T$ acts distally on $\spg$. Then we want to show that $T$ is contained 
in a compact subgroup of $\Aut(G)$. If $M$ is trivial, then $G$ belongs to the class $\cp$ and the 
assertion follows from Theorem \ref{main}\,(2). Now suppose $M$ is non-trivial. By Theorem \ref{distal-g}, 
$T$ acts distally on $G$. Then by Theorem 3.1 of \cite{RS1}, $\ol{T}$ acts distally on $G/M$, where 
$\ol{T}$ is the automorphism of $G/M$ induced by $T$. 

Suppose $G=M$. Since $T$ acts distally on $M$, by Lemma 2.5 of \cite{Ab2}, 
$T^n$ is unipotent for some $n\in\N$. As $T$ acts distally on $\spg$, so does $T^n$, 
and by Theorem \ref{unip}, $T^n=\Id$. Hence the assertion holds when $G$ is compact and abelian. 

Now suppose $M$ is non-trivial and $G\ne M$. We show that there exists a maximal torus of $G$ 
which is $T^k$-invariant for some $k\in\N$. Let $\pi:G\to G/M$ be the natural projection. 
By Proposition \ref{gk}, $\ol{T}$ acts distally on $\spp_{G/M}$. 

Suppose $G/M$ is a vector group. Then by Theorem \ref{main}\,(1), 
$\ol{T}=\psi\circ\,\alpha\,\Id$, for some $\ap\in\R\setminus\{0\}$ and $\psi$ 
generates a relatively compact subgroup in $\Aut(G/M)$. Since $\ol{T}$ acts distally on $G/M$, 
we get that $|\alpha|=1$, and hence $\ol{T}$ is contained in a compact subgroup of $\Aut(G/M)$. 
Now suppose $G/M$ is not a vector group. As $G/M$ belongs to the class $\cp$, by Theorem \ref{main}\,(2), 
$\ol{T}$ is contained in a compact subgroup of $\Aut(G/M)$. So in either case we have that $\ol{T}$ is contained 
in a compact subgroup of $\Aut(G/M)$.  

Let $\K$ be the connected component of the identity in the closed (compact) subgroup generated by 
$\ol{T}$ in $\Aut(G/M)$. Then $(\ol{T})^k\in \K$ for some $k\in\N$. 
Now by Lemma \ref{aut-inv}, $(\ol{T})^k$ keeps a maximal torus (say) $A$ of $G/M$ invariant and 
it acts trivially on $A$. Let $K=\pi^{-1}(A)$. Then $K$ is a maximal torus of $G$ and $T^k(K)=K$. 

Note that $K$ is compact, connected and abelian. Moreover, as $T$ acts distally on $G$, so does $T^k$ 
and the restriction of $T^k$ to $K$ is also distal. Arguing as above for $K$ instead of $M$, we get that for 
some $m\in\N$, $T^m|_K$ is unipotent and, as it acts distally on $\spp_K$, it acts trivially on $K$ 
(by Theorem \ref{unip}). Without loss of any generality, we may replace $T$ by $T^m$ and assume that 
$T$ fixes every element of $K$, 
i.e.\ $T\in F_K(G):=\{\varrho \in \Aut(G)\mid \varrho(x)=x \mbox{ for all } x\in K\}$. We know that $F_K(G)$ is almost 
algebraic (cf.\ \cite{DS2}). As $T$ acts distally on $G$ and $T\in F_K(G)$, for some $n\in\N$, 
$T^n$ is contained in a connected almost algebraic subgroup $\Bc\times \U$ of $F_K(G)$, where $\Bc$ 
is a compact group and $\U$ consists of unipotent elements (cf.\ \cite{Ab1}, Corollaries 2.3 and 2.5). 
Now we have that $T=\phi\circ S=S\circ\phi$ such that $\phi\in \Bc$ 
and $S\in \U$. As $\Bc$ is compact, we have that $S$ acts distally on $\spg$. 
By Theorem \ref{unip}, $S=\Id$. Then $T=\phi\in\Bc$ and (5) holds. 
\end{proof}

Using Theorems \ref{main} and \ref{general} and some known results, we prove Theorem \ref{subgp-action}.

\begin{proof}[Proof of Theorem \ref{subgp-action}] Let $G$ be a connected Lie group and let $\Hc$ 
be a subgroup of $\Aut(G)$. Suppose $\Hc$ acts distally on the largest compact connected central 
subgroup of $G$. Then so does $\ol{\Hc}$. Note that 
$\Hc$ acts distally on a compact $\Hc$-invariant subspace of $\Sub_G$ if and only if so does $\ol{\Hc}$ 
(both the above statements follow from Theorem 1 in \cite{E}). Also (7) holds for $\Hc$ if and only if it holds  
when $\Hc$ is replaced by $\ol{\Hc}$, as $\K\D$ is closed in $\GL(d,\R)$ for $\K,\D$ as in (7).  
Therefore, to prove the assertions, we may assume that $\Hc$ is closed. 

Note that every compact subgroup of $\Aut(G)$ acts distally on $\Sub_G$ (see e.g.\ \cite{SY2}, Lemma 2.2). 
Now $(6)\implies (5)\implies (4)\implies (3)\implies (2)\implies (1)$ are obvious. 

First suppose that $G$ is not a vector group. We show that $(1)\implies (6)$. 
Suppose (1) holds, i.e.\ every $T\in \Hc$ acts distally on $\spg$. By Theorem \ref{general}, 
every $T\in \Hc$ is contained in a compact subgroup of $\Aut(G)$. Recall that $\Aut(G)$ is identified 
with a closed subgroup of $\GL(\gf)$ via the map $T\mapsto\du T$, where $\gf$ is the Lie algebra of $G$ 
and it is a $d$-dimensional vector space for some $d\in\N$  (cf.\ \cite{Ch}, see also \cite{Hoc}). 
Therefore, $\Hc$ is also a closed subgroup of $\GL(\gf)$ and the latter is isomorphic to $\GL(d,\R)$, 
and we get by Theorem 1.1 of \cite{FP} (see also Proposition 2 of \cite{SY1}) that $\Hc$ is compact. 
Thus (6) holds and $(1-6)$ are equivalent. 
Thus, if $G$ is not a vector group, the assertions in the theorem hold for this case.  
 
Now suppose $G=\R^d$ for some $d\in\N$. Suppose (3) holds, i.e.\ $\Hc$ acts distally on $\ol{\Sub^c_G}$. 
Then $T\in\rm{(NC)}$ for every $T\in\Hc$. Since $G$ belongs to the class $\cp$, by Theorem 4.1 of \cite{SY3}, 
$T$ is contained in a compact subgroup of $\Aut(G)$ for every $T\in\Hc$. Since $\Hc$ is closed, arguing as 
above using Theorem 1.1 of \cite{FP} or Proposition 2 of \cite{SY1}, we get that $\Hc$ is compact. Hence
(6) holds. Thus $(3-6)$ are equivalent when $G$ is a vector group. 
Now we show that (1), (2) and (7) are equivalent. Suppose (7) holds. Note that 
$\D=\{r\,\Id\in\GL(d,\R)\mid r\in\R\mi\{0\}\}$. 
Therefore, $\D$ acts trivially on $\spg$ and the action of $\K\D$ on $\spg$ is same as that 
of the compact group $\K$ on $\spg$. Therefore, the action of $\K\D$, and hence, the action of 
$\Hc$ on $\spg$ is distal. Thus $(7)\implies (2)$. We have already noted that $(2)\implies (1)$. 

Now we show that $(1)\implies (7)$. Suppose every $T\in \Hc$ acts distally on $\spg$. Let $T\in \Hc$ be fixed. 
By Theorem \ref{main}\,(1), $T\in \Bc\D$, for some compact group $\Bc\subset\GL(d,\R)$. Let 
$\Hc_1:=\ol{[\Hc,\Hc]}$. Then $\Hc_1\subset \Hc$ as the latter is closed. Now let $T\in\Hc_1$. Then $\det T=1$.
From above we have that $T=\phi\,\circ\,r\,\Id$ for some $\phi\in\Bc$ and some 
$r\in\R\mi\{0\}$. Since $\Bc$ is a compact group, $\det\phi=\pm 1$, and hence $r=\pm 1$. Therefore, 
$T$ generates a relatively compact group, for every $T\in \Hc_1$. As $\Hc_1$ is a closed subgroup of 
$\GL(d,\R)$ by Theorem 1.1 of \cite{FP} or Proposition 2 of \cite{SY1}, $\Hc_1$ is compact. 
Let $N(\Hc_1)$ be the normaliser of $\Hc_1$ in $\GL(d,\R)$. Then $N(\Hc_1)$ is a closed algebraic 
subgroup of $\GL(d,\R)$ and $\Hc,\D\subset N(\Hc_1)$. Moreover, $N(\Hc_1)\subset N(\Hc_1\D)$. 
Being algebraic, $N(\Hc_1)$ has finitely many connected components, and hence so does 
$N(\Hc_1)/\Hc_1\D$. In particular any closed abelian subgroup of $N(\Hc_1)/\Hc_1\D$ is compactly generated 
(cf.\ \cite{HN}). Let $\rho: N(\Hc_1)\to N(\Hc_1)/\Hc_1\D$ be the natural projection. Then $\rho(\Hc)$ is abelian, 
and hence so is $\ol{\rho(\Hc)}$. Therefore, $\ol{\rho(\Hc)}$ is compactly generated, and it has a unique maximal 
compact subgroup, say $\mathcal{L}$. 

Now let $T\in \Hc$. Then $T=\psi\,\circ\,r\,\Id$, for some $r\in\R\setminus\{0\}$ and some $\psi$ which generates 
a relatively compact group and $r\,\Id\in \D$. Then $\rho(T)=\rho(\psi)$ also generates a relatively compact 
group in $\ol{\rho(\Hc)}$, and hence it is contained in $\mathcal{L}$. Thus $\rho(\Hc)\subset \mathcal{L}$ and $\ol{\rho(\Hc)}$ 
is compact. As $\Hc_1$ is compact, it follows that $\ol{\Hc\D}/\D$ is compact. Since $\D$ 
has two connected components, we get that $\ol{\Hc\D}/\D^0$ is also compact and it is a Lie group. 
As $\D^0$ is a vector group which is central in $\ol{\Hc\D}$, by Lemma 3.7 of \cite{I}, 
$\ol{\Hc\D}=\K\times \D^0$, where $\K$ is the maximal compact subgroup of $\ol{\Hc\D}$. 
In particular, $\Hc\subset \K\D^0$. Hence, (7) holds. Thus, if $G$ is a vector group, (1), (2) and (7) 
are equivalent. This completes the proof. 
\end{proof}

Now we state two corollaries which are easy to deduce using results proven above and a result in \cite{Ab1}.

\begin{cor} \label{distal-gspg} Let $G$ be a connected Lie group and let $\Hc$ be a subgroup of $\Aut(G)$. 
Then $\Hc$ acts distally on both $G$ and $\spg$ if and only if $\ol{\Hc}$ is compact. 
\end{cor}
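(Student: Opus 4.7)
The plan is to reduce to Theorem \ref{subgp-action} and then handle the vector group case separately via a short spectral argument coming from the additional hypothesis of distality on $G$.

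The reverse implication is immediate: if $\ol{\Hc}$ is compact then $\Hc\subset\ol{\Hc}$, a compact subgroup of $\Aut(G)$, which acts distally on $G$ (cf.\ \cite{Ab1}) and on every closed $\Aut(G)$-invariant subset of $\Sub_G$, in particular on $\spg$ (cf.\ Lemma 2.2 of \cite{SY2}).

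For the forward direction, I would first replace $\Hc$ by $\ol{\Hc}$: as noted in the introduction, distal action on $G$ passes to the closure by \cite{Ab1, Ab2}, and distal action on a compact invariant subset of $\Sub_G$ passes to the closure by \cite{E}. The largest compact connected central subgroup $M$ of $G$ is characteristic, hence $\ol{\Hc}$-invariant, and the distality of $\ol{\Hc}$ on $G$ restricts to distality on $M$, so the hypothesis of Theorem \ref{subgp-action} is in force. If $G$ is not a vector group, Theorem \ref{subgp-action} gives directly that $\ol{\Hc}$ is compact, and we are done.

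The remaining case $G=\R^d$ is the only real content. Here Theorem \ref{subgp-action} yields $\ol{\Hc}\subset \K\D$ with $\K\subset\GL(d,\R)$ compact and $\D=\{r\,\Id\mid r\in\R\setminus\{0\}\}$. For any $T\in\ol{\Hc}$, writing $T=\phi\circ r\,\Id$ with $\phi\in\K$ and $r\in\R\setminus\{0\}$, the eigenvalues of $\phi$ all have absolute value $1$, so the eigenvalues of $T$ all have absolute value $|r|$; the distality of $T$ on $\R^d$ together with the spectral characterization of distality from \cite{Ab1} forces $|r|=1$. Hence every element of $\ol{\Hc}$ lies in the compact set $\K\cdot\{\pm\Id\}\subset\GL(d,\R)$, and since $\ol{\Hc}$ is a closed subgroup of $\GL(d,\R)$, Theorem 1.1 of \cite{FP} (equivalently, Proposition 2 of \cite{SY1}) concludes that $\ol{\Hc}$ is compact. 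The main obstacle is precisely this vector group case: Theorem \ref{main}\,(1) shows that distality on $\spg$ alone permits scaling by any $r\,\Id$, so it is exactly the additional hypothesis of distality on $G$ that pins down $|r|=1$ and forces compactness of $\ol{\Hc}$.
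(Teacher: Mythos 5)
Your proof is correct and follows essentially the same route the paper sketches: reduce to Theorem \ref{subgp-action} for non-vector groups, and in the vector group case combine statement (7) ($\Hc\subset\K\D$) with the eigenvalue characterisation of distality on $\R^d$ from \cite{Ab1} to force $|r|=1$ and hence compactness of $\ol{\Hc}$. You have simply filled in the details the paper omits; the only (harmless) redundancy is the appeal to \cite{FP}, since $\ol{\Hc}$ is already a closed subset of the compact set $\K\cdot\{\pm\Id\}$.
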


Corollary \ref{distal-gspg} is a direct consequence of Theorem \ref{subgp-action} in case $G$ is not a vector group. 
If $G$ is a vector group, then as $\Hc$ acts distally on $G$, the eigenvalues of every element in $\Hc$ 
have absolute value 1 (cf.\ \cite{Ab1}), and this, together with the statement (7) in Theorem \ref{subgp-action},  
is equivalent to the statement that $\ol{\Hc}$ is compact. We omit the details. 

The following corollary is an extension of Corollary 4.6 of \cite{SY3} for a particular class of Lie groups.
 It can be proven easily using Theorems \ref{main} and \ref{subgp-action} and Proposition \ref{gk}. 
 We omit the proof. 

\begin{cor} \label{bounded-orbits}
Let $G$ be a connected Lie group such that $G/M$ is not a vector group, where $M$ is the maximal compact 
connected central subgroup of $G$. Let $T\in\Aut(G)$ {\rm (}resp.\ $\Hc$ be a subgroup of $\Aut(G)${\rm )}. If $T$ 
(resp.\ $\Hc$) acts distally on $\spg$, then $T$ (resp.\ $\Hc$) has bounded orbits; 
i.e.\ $\{T^n(x)\mid n\in\Z\}$ (resp.\ $\Hc x$) is relatively compact in $G$, for every $x\in G$.
\end{cor}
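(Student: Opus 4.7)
The plan is to reduce to Theorem \ref{subgp-action} by passing to the quotient $G/M$. Since $M$ is characteristic in $G$, every $T\in\Aut(G)$ satisfies $T(M)=M$ and induces an automorphism $\ol{T}\in\Aut(G/M)$; likewise, any subgroup $\Hc\subset\Aut(G)$ gives rise to a subgroup $\rho(\Hc)\subset\Aut(G/M)$ via the natural homomorphism $\rho:\Aut(G)\to\Aut(G/M)$. The maximality of $M$ guarantees that $G/M$ has no non-trivial compact connected central subgroup (i.e.\ $G/M\in\cp$), so the central-torus hypothesis in Theorem \ref{subgp-action} is automatically satisfied for $\ol{T}$ and $\rho(\Hc)$.

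Next I would verify that $\ol{T}$ (resp.\ $\rho(\Hc)$) acts distally on $\spp_{G/M}$. For a single $T$ this is precisely Proposition \ref{gk}; for a subgroup the same argument applies, since the compact set $\cS(M)=\{H\in\spg\mid HM=H\}$ is $\Hc$-invariant, and by Lemma \ref{subpgk}\,(4) the natural map $H\mapsto\pi(H)$ is a homeomorphism from $\cS(M)$ onto $\spp_{G/M}$ intertwining the two actions, so distality transfers from $\Hc$ on $\cS(M)\subset\spg$ to $\rho(\Hc)$ on $\spp_{G/M}$. Because $G/M$ is not a vector group by hypothesis, Theorem \ref{general} (resp.\ Theorem \ref{subgp-action}) then yields that $\ol{T}$ is contained in a compact subgroup of $\Aut(G/M)$ (resp.\ the closure of $\rho(\Hc)$ in $\Aut(G/M)$ is compact).

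The last step is to lift relative compactness of orbits. Since the compact group in question acts continuously on $G/M$, every orbit $\{\ol{T}^n(xM)\mid n\in\Z\}$ (resp.\ $\rho(\Hc)(xM)$) is relatively compact in $G/M$. As $M$ is compact, the projection $\pi:G\to G/M$ is proper, so the preimage of any compact set is compact in $G$; hence $\{T^n(x)\mid n\in\Z\}$ (resp.\ $\Hc x$) is relatively compact in $G$ for every $x\in G$. I do not anticipate a substantive obstacle: the nontrivial content is entirely packaged into Theorems \ref{general}--\ref{subgp-action} and Proposition \ref{gk}, and the remaining ingredients (properness of $\pi$ and the subgroup extension of Proposition \ref{gk} via Lemma \ref{subpgk}) are routine.
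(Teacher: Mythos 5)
Your proposal is correct and follows essentially the route the paper indicates for its (omitted) proof: reduce to $G/M\in\cp$ via Proposition \ref{gk} (extended to subgroups through the homeomorphism of Lemma \ref{subpgk}\,(4)), invoke Theorem \ref{main}\,(2)/Theorem \ref{subgp-action} on $G/M$, which is not a vector group, to get compactness of the induced automorphism group, and lift bounded orbits back to $G$ using that $\pi:G\to G/M$ is proper since $M$ is compact. No substantive difference from the intended argument.
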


Now we prove Corollary \ref{inner}. The statement in the corollary which requires a proof is $(1)\implies (5)$ 
and can be proven along the lines of proof of $(1)\implies (5)$ of Corollary 4.5 in \cite{SY3}; instead of 
Corollary 3.9 and Theorem 4.3 there, one has to use Theorems \ref{distal-g} and \ref{unip}. 
We will give a proof here for the sake of completeness. 

\begin{proof}[Proof of Corollary \ref{inner}] Let $G$ be a connected Lie group. The statement 
in (5) implies that $\Inn(G)$ is compact, and hence it follows that $(5)\implies (4)$. 
Also $(4)\implies (3)\implies (1)$ and $(4)\implies (2)\implies (1)$ are obvious. 
It is enough to show that $(1)\implies (5)$. 

Suppose (1) holds; i.e.\ every inner automorphism of $G$ acts distally on $\spg$. If $G$ is either compact 
or a vector group, then (5) holds trivially. Now suppose $G$ neither compact nor a vector group. 
Since every inner automorphism acts trivially on the center of $G$, we have by Theorem \ref{distal-g} that 
it acts distally on $G$. Therefore, $G$ is distal (cf.\ \cite{Ro}), i.e.\ the action of $\Inn(G)$ on $G$ is distal. 
It follows from Theorem 9 of \cite{Ro} and Corollary 2.1\,(ii) of \cite{Je} that $G/R$ is compact, where $R$ 
is the radical of $G$. Let $N$ be the nilradical of $G$ and let $x\in N$. Then $\inn(x)$ is unipotent, and 
by Theorem \ref{unip}, $\inn(x)=\Id$. This implies that $x$ belongs to the center of $G$ for every $x\in N$. Thus 
$N$ is abelian and central in $G$. Now for the radical $R$, we know that $[R,R]\subset N$ is central in $G$, 
and hence $R$ is nilpotent and it is same as $N$ and it is abelian and central in $G$. Now $G/N$ is compact,  
where $N$ is central in $G$, i.e.\ $N=\R^n\times M$, for some $n\in\N$, where $M$ is compact. Here, 
$V:=\R^n$ is a vector subgroup which is central. By Lemma 3.7 of \cite{I}, $G=\R^n\times K$, where $K$ 
is the maximal compact subgroup of $G$. Thus (5) holds.
\end{proof}

The distal actions of automorphisms $T$ of a connected Lie group $G$ on $\Sub^a_G$ were studied 
earlier if $G$ is in class $\cp$ or $T$ is unipotent. Here we have studied a more general case when 
$T$ acts distally on the maximal central torus of $G$ and the action is considered on $\spg$, a smaller 
subspace of $\Sub^a_G$. More generally, we have also studied the actions of subgroups $\Hc$ of $\Aut(G)$ 
on $\spg$ under a condition that the $\Hc$-action on the maximal central torus of $G$ is distal. This latter 
condition is satisfied if $\Hc$ is contained in $\Aut(G)^0$ or more generally, if it is contained in an almost 
algebraic subgroup of $\Aut(G)$. 

The following example of a connected solvable Lie group $G$, known as the Walnut group, is not covered 
by Theorem \ref{main} or any earlier theorems for the distal action of a general 
automorphism on $\Sub^a_G$, as it has nontrivial central torus and $\Aut(G)$ is not almost algebraic 
(see \cite{DS2}). But it is covered by theorems proven here.

\begin{example}
Let $\mathbb{H}$ be the 3-dimensional Heisenberg group consisting of $\,3\times 3$ real strictly upper triangular matrices. 
There is a canonical action of $SL(2,\R)$ on $\mathbb{H}$ which fixes elements of its center, which is isomorphic to $\R$. 
Let $N=\mathbb{H}/D$, where $D$ is a discrete central subgroup isomorphic to $\Z$. Then $N$ is a connected 2-step 
nilpotent group and the $\SL(2,\R)$-action on $\mathbb{H}$ carries over to the action on $N$. Let $G=SO(2,\R)\ltimes N$, 
the Walnut group. Since the center of $G$ is isomorphic to $\Sc^1$, $\Aut(G)$ acts distally on it. 
Thus Theorems \ref{distal-g} and \ref{general} hold for the action of any $T\in\Aut(G)$ 
and Theorem \ref {subgp-action} holds for any subgroup $\Hc$ of $\Aut(G)$. 
\end{example}

One can also take a non-solvable group $G=SL(2,\R)\ltimes N$, for $N$ mentioned as above, 
whose automorphism group is almost algebraic (see \cite{DS2}). Theorems mentioned in the above example 
hold also for this group. This group $G$ is mentioned in Remark 4.8 of \cite{SY3}, although it has a maximal torus 
of dimension 2, $\Aut(G)$ is almost algebraic (see \cite{DS2}), and the conclusion in Theorem 4.1 of \cite{SY3}
mentioned in the remark holds for this $G$. 

It would be interesting to characterise general automorphisms of any connected Lie group $G$ 
which act distally on $\Sub_G$ or its closed invariant subspace without 
any condition involving the action on the central torus. 

\smallskip
\noindent{\bf Acknowledgement.} The authors would like to acknowledge the support in part by the International 
Centre for Theoretical Sciences (ICTS) during a visit for participating in the program - ICTS Ergodic Theory 
and Dynamical Systems (code: ICTS/etds-2022/12).

\end{document}